\journal{journal}
\date{}
\pgfplotsset{compat=1.10}
\newtheorem{theorem}{Theorem}
\newtheorem{lemma}[theorem]{Lemma}
\newtheorem{proposition}[theorem]{Proposition}
\newdefinition{remark}{Remark}
\newcommand{\LL}{\mathcal{L}} 
\newcommand*{\diff}{\mathop{}\!\mathrm{d}}
\newcommand*{\ee}{\mathop{}\!\mathrm{e}} 
\renewcommand*{\Re}{\mathop{}\!\mathrm{Re}\,} 
\definecolor{myblue}{rgb}{0.00000,0.44700,0.74100}%
\newcommand{\fra}[1]{\textcolor{black}{#1}}
\begin{document}

\begin{frontmatter}

\title{Numerical bifurcation analysis of renewal equations via pseudospectral approximation
\tnoteref{mytitlenote}}
\tnotetext[mytitlenote]{Declarations of interest: none.}


\author[Manchester,York,UD]{Francesca Scarabel\corref{mycorrespondingauthor}
}
\cortext[mycorrespondingauthor]{Corresponding author}
\ead{francesca.scarabel@manchester.ac.uk}

\author[Utrecht]{Odo Diekmann}
\ead{O.Diekmann@uu.nl}

\author[UD]{Rossana Vermiglio}
\ead{rossana.vermiglio@uniud.it}

\address[Manchester]{Department of Mathematics, The University of Manchester, Oxford Rd, M13 9PL Manchester, UK}
\address[York]{LIAM -- Laboratory for Industrial and Applied Mathematics, Department of Mathematics and Statistics, York University, 4700 Keele Street, Toronto, ON M3J 1P3, Canada}
\address[UD]{CDLab -- Computational Dynamics Laboratory, Department of Mathematics, Computer Science and Physics, University of Udine, via delle scienze 206, 33100 Udine, Italy}
\address[Utrecht]{Department of Mathematics, Utrecht University, P.O. Box 80010, 3508 TA Utrecht, The Netherlands}

\begin{abstract}
We propose an approximation of nonlinear renewal equations by means of ordinary differential equations.
We consider the integrated state, which is absolutely continuous and satisfies a delay differential equation. By applying the pseudospectral approach to the abstract formulation of the differential equation, we obtain an approximating system of ordinary differential equations.
We present convergence proofs for equilibria and the associated characteristic roots, and we use some models from ecology and epidemiology to illustrate the benefits of the approach to perform numerical bifurcation analyses of equilibria and periodic solutions.
The numerical simulations show that the implementation of the new approximating system can be about ten times more efficient in terms of computational times than the one originally proposed in [Breda et al, SIAM Journal on Applied Dynamical Systems, 2016], as it avoids the numerical inversion of an algebraic equation.
\end{abstract}

\begin{keyword}
nonlinear renewal equation; equilibria; periodic solutions; Hopf bifurcation; stability analysis; pseudospectral method
\end{keyword}

\end{frontmatter}


%

\section{Introduction}

In our perspective, a renewal equation (RE) is a delay equation, i.e., a rule for extending a function of time towards the future on the basis of the (assumed to be) known history. The difference between delay differential equations (DDE) and RE is that, for the former, the rule specifies the derivative of the function in the current time point, while for the latter the rule specifies the function value itself.
For both, one defines a dynamical system by translation along the extended function, so by updating the history. 
But while for DDE the natural state space consists of continuous functions (with the supremum norm), for RE it is more natural to consider integrable functions (with the $L^1$ norm). 
Stability and bifurcation results for RE are formulated in \cite{DGG2007} as corollaries of the general theory presented in \cite{DiekmannBook}.

RE arise routinely in the formulation of physiologically structured population models, see e.g. \cite{DGM2010}. In that context, one would often like to go beyond a pen and paper analysis and perform a numerical bifurcation analysis. But the lack of tools that can handle this kind of delay equations clearly forms an obstruction.

In \cite{SIADS2016} the idea is launched to first reduce the infinite dimensional dynamical system corresponding to a delay equation to a finite dimensional one by pseudospectral approximation, and next use tools for ordinary differential equations (ODE) in order to perform a numerical bifurcation analysis. Several examples illustrate that this approach is promising (also see \cite{EJQTDE2016, Ando2019b, Babette, DCDS2020Vermiglio, JMB2019, Vietnam2020, AMC2018}).
Note that we restrict to \emph{bounded} maximal delay, as unbounded delays require to consider different (exponentially weighted) function spaces and corresponding results from weighted interpolation theory \cite{AMC2018,DG2012}.

A nice feature of pseudospectral approximation is that, in the resulting ODE, one can recognize that the dynamics involves both a rule for extension and translation. The latter is captured by a matrix, often called differentiation matrix, that depends on the choice of mesh points but \emph{not} on the delay equation under consideration. 
\fra{In the approximation of }
a scalar DDE, the rule for extension is reflected in the expression for the derivative of exactly one component, so the nonlinear part of the ODE has one dimensional range; 
\fra{in the approximation of }
systems of DDE, the dimension of the range corresponds to the dimension of the system. The fact that, for RE, the rule for extension does specify the value, rather than the derivative, makes its incorporation in the ODE less straightforward. In \cite{SIADS2016,EJQTDE2016} an ad hoc method was employed: the value in the current time point was computed from the (approximate) history and the right-hand side of the RE by way of a numerical solver. The aim of the present paper is to introduce a much more natural and elegant alternative, which also improves the efficiency of the numerical method.

The main new idea is to approximate the indefinite integral of the integrable function, rather than the integrable function itself. First of all, a reassuring consequence is that now we approximate a function that has well-defined point values (in contrast with an $L^1$ equivalence class).
More importantly: in terms of the integrated function, the original RE becomes a DDE, as the rule for extension specifies its derivative in the current time point. 
The difference with a ``true'' DDE is that we have to incorporate a (re)normalization condition in order to have a one-to-one relationship between the integrated function and its derivative. The resulting ODE therefore has a slightly different structure: for a scalar equation the nonlinear part again has one-dimensional range, but in natural coordinates the range is spanned by a different vector.

From a more abstract point of view, we \fra{represent $L^1$ by $AC_0$}, the subspace of $NBV$ (normalized bounded variation functions) consisting of absolutely continuous functions. 
This embedding also features in the \emph{sun-star} framework of \cite{DGG2007,DiekmannBook} (where the ``big'' space $NBV$ serves to represent the rule for extension as a perturbation with range spanned by a Dirac mass) and in the more recent theory of twin semigroups \cite{TwinSemigroups}. 
The space \fra{$AC_0$} also guarantees the convergence of the approximation: classical results from interpolation theory \cite{Krylov, MastroianniBook, Trefethen2013} ensure the convergence of polynomial interpolation, in supremum norm (and not necessarily in $NBV$ norm), for functions that are at least absolutely continuous, for a suitable choice of the interpolation nodes in the bounded interval. 

An important advantage of the current method compared to the approach proposed in \cite{SIADS2016} is the remarkable reduction of computational costs in all the simulations considered here (see for instance Figure \ref{f:specialRE_time} and Table \ref{t:times} below).
The inversion of the nonlinear condition with a numerical solver is indeed the main bottleneck of the method in \cite{SIADS2016}.
The improved computational efficiency is fundamental especially when dealing with complex applications from population dynamics as the coupled RE/DDE models for \emph{Daphnia} \cite{DGM2010}, that are particularly challenging to treat numerically and often need ad hoc techniques \cite{Vietnam2020, Ando2020a, DCDS2020Ando, PSPManalysis2020}.

\smallskip
In this paper we focus on the approximation of equilibria and their stability. 
We start in the next section by providing some concrete examples that illustrate the main features and potential of the approximation approach.
The latter is introduced rigorously in Section \ref{s:method} for general scalar nonlinear RE, together with some basic results regarding the approximation of equilibria.
Section \ref{s:linear_conv} focuses on autonomous linear equations: we show that characteristic roots and exponential solutions are approximated with infinite order of convergence as the dimension of the approximation increases.
An outlook discussion is presented in Section \ref{s:outlook}.

In population models the RE usually concerns the population level birth rate and for that reason we chose to use the character $b$ to denote the variable. The integrated quantity corresponds to the cumulative birth rate and is denoted by $B$.

\section{Some illustrative examples}
\label{s:examples}

In this section we consider some specific nonlinear RE.
All the equations are approximated with an ODE system using the method introduced rigorously in Section \ref{s:method}. The dimension $M$ of the approximating ODE system is specified each time. 
The bifurcation diagram of the approximating system is then studied numerically using software for numerical bifurcation analysis of ODE.
Specifically we use the package MatCont (version 7p1) \cite{Matcont0} running on MATLAB 2019a.
To improve efficiency, the integrals are computed using Clenshaw--Curtis quadrature formulas \cite{Trefethen2000}.
\fra{MATLAB codes used to obtain the results in this paper are available at \href{http://cdlab.uniud.it/software}{\path{http://cdlab.uniud.it/software}}.
}

We have three main goals: 
1) show the suitability of the approach (ODE approximation plus software for numerical bifurcation) to reveal bifurcations of equilibria and to study some more advanced dynamical behaviors; 2) carry out a preliminary study of the convergence of the approximations; 
3) compare the performances and the output of the method presented here with the one proposed in \cite{SIADS2016}.

\subsection{An SIRS model}

Let $k \colon \mathbb{R} \to \mathbb{R}$ be a nonnegative and measurable function with support in $[0,1]$, and normalized such that $\int_0^1 k(s)\diff s=1$. 
Consider the nonlinear equation
\begin{equation} \label{prelude}
b(t) = \gamma \left(1-\int_0^1 b(t-s)\diff s \right) \int_0^1 k(s) b(t-s) \diff s, \qquad t>0,
\end{equation}
for \fra{$\gamma > 0$}. 
In \cite{Diekmann1982prelude}, the authors derive this equation in the context of an SIRS epidemic model, and study the bifurcation with respect to $\gamma$. It is proved that the trivial equilibrium undergoes a transcritical bifurcation at $\gamma=1$, and a positive stable equilibrium exists for $\gamma>1$.
Under some conditions on the kernel $k$, the positive equilibrium undergoes a sequence of Hopf bifurcations as $\gamma$ increases.
The authors also conjecture that \eqref{prelude} may exhibit chaotic behavior for large values of $\gamma$.

We here consider a truncated Gamma-type kernel of the form
\begin{equation} \label{gamma_kernel}
k(s) =
\begin{cases}
 \alpha s^{m-1} \ee^{-\frac{s}{\theta}} & s \leq 1, \\
 0 & s > 1,
 \end{cases}
\end{equation}
for fixed parameters $m=3$, $\theta=0.1$, and $\alpha$ a normalising constant so that $\int_0^1 k(s) \diff s=1$. 
\fra{The kernel \eqref{gamma_kernel} belongs to the class considered in \cite{Diekmann1982prelude} due to the cut-off at $s=1$.}
The bifurcation diagram with respect to $\log \gamma$ is shown in Figure \ref{f:prelude_gamma_bif} for different values of the dimension $M$ of the approximating system. 
A Hopf bifurcation is detected on the branch of positive equilibria, after which the equilibrium becomes unstable and a branch of stable periodic solutions appears.
Note that the minimum values of the periodic orbits come close to zero when $\log \gamma$ approaches 3. When this happens, $M = 10$ is not sufficient to approximate the orbit, but for $M=20,40$ the lower portions of the curves are indistinguishable.

\begin{figure}[t]
\centering
\includegraphics[scale=1]{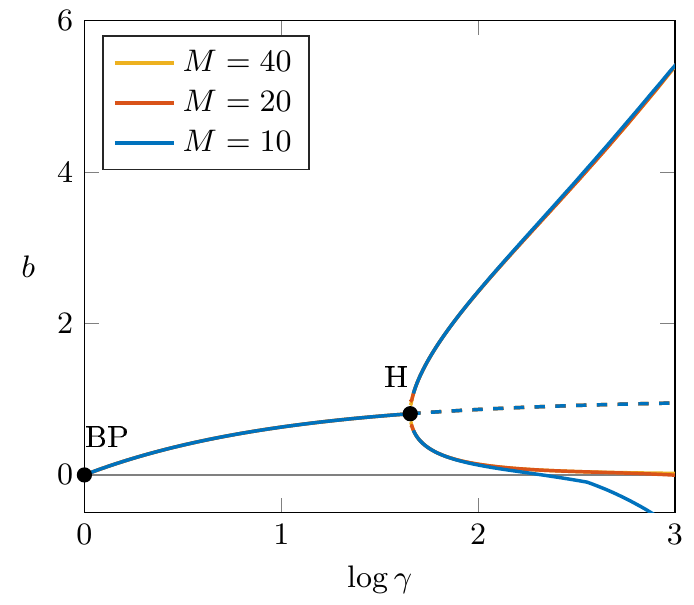}
\caption{Equation \eqref{prelude} with kernel \eqref{gamma_kernel}, with $m=3$ and $\theta=0.1$. Bifurcation diagram with respect to $\log \gamma$, for different values of $M$. A Hopf bifurcation is detected at $\log\gamma \approx 1.6553$. Equilibrium branch and the maximum and minimum values of the periodic orbits are plotted; solid lines correspond to stable branches, dashed lines to unstable ones.
}
\label{f:prelude_gamma_bif}
\end{figure}

\subsection{Nicholson's blowflies equation} 
Consider Nicholson's blowflies DDE \cite{Gurney1980}
 \begin{equation} \label{Nich-DDE}
 A'(t) = -\mu A(t) + \beta_0 \ee^{-\mu} A(t-1) \ee^{-A(t-1)}, \qquad t>0,
 \end{equation}
for $\beta_0,\mu \geq 0$, where $A(t)$ denotes the size of the adult population, and newborns become adult after a maturation delay which is normalized to 1.
For $b(t):= \beta_0 A(t) \ee^{-A(t)},$ \eqref{Nich-DDE} is equivalent to
\begin{equation}\label{Nich-RE}
b(t) = \beta_0 \ee^{-\int_1^{\infty} b(t-s) \ee^{-\mu s} \diff s} \, \int_1^{\infty} b(t-s) \ee^{-\mu s} \diff s, \qquad t>0.
\end{equation}
The equivalence is rigorously proved in \ref{app:blowflies}; however note that, since $b(t)$ represents the population birth rate, \eqref{Nich-RE} follows directly from modelling assumptions and there is no need to derive it from the DDE.

It is easy to see that the (unique) nontrivial equilibrium of \eqref{Nich-DDE} exists for $\beta_0 > \mu \ee^{\mu}$, with value
$$ \overline A = \log \frac{\beta_0}{\mu} - \mu.$$ 
Correspondingly, the equilibrium of \eqref{Nich-RE} is $\overline b = (\log \frac{\beta_0}{\mu} - \mu) \mu \ee^{\mu}$.

The nontrivial equilibrium undergoes a Hopf bifurcation as $\beta_0$ increases. 
An explicit parametrization of the Hopf bifurcation curves in the plane $(\mu,\beta_0 \ee^{-\mu}/\mu)$ is computed in \cite{Babette} for the DDE \eqref{Nich-DDE}.
Moreover, the numerical bifurcation analysis of \eqref{Nich-DDE} can be performed with standard software for DDE, like DDE-BIFTOOL for MATLAB \cite{ddebiftool0, ddebiftool}. 
Therefore, the blowflies equation allows us to compare the output of MatCont computations on the pseudospectral approximation of \eqref{Nich-RE} with analytic formulas for the Hopf bifurcation curves and with the output of DDE-BIFTOOL for bifurcations of periodic solutions of \eqref{Nich-DDE}. 

In order to apply the method presented here, we truncated the integral in \eqref{Nich-RE} so that the probability of survival at a finite maximal time $\tau$ is less than a certain threshold. We chose $\tau=10$, as it ensures that the survival probability $\ee^{-\mu \tau}$ is less than $10^{-6}$ 
for $\mu = 1$, less than $10^{-10}$ 
for $\mu=2$, and less than $10^{-19}$ 
for $\mu=4$.
Figure \ref{f:Nich-fixed-tau} shows the Hopf bifurcation curve approximated with different values of $M$, for fixed $\tau=10$, and compared with the analytic curve \fra{obtained for the DDE}. The curves in a two-parameter plane were obtained by first performing a one-parameter continuation for $\mu=4$, and then starting the two-parameter continuation from the detected Hopf bifurcation. Note that no Hopf bifurcation was detected for $M\leq 8$.

While in Figure \ref{f:Nich-fixed-tau} we fixed the delay interval $[-\tau,0]$ and varied the dimension $M$ of the approximation, Figure \ref{f:Nich-fixed-M} shows the effect of changing the truncation delay $\tau$ while fixing $M$. The results highlight the delicate balance between the choice of the truncation delay and the degree of \fra{the} approximation: for a good approximation, a larger interval requires a larger dimension of the approximating system. This is particularly evident in the analysis of complex bifurcations, for instance the period doubling bifurcation of periodic orbits, and demonstrates the need to develop an approximation specifically tailored to unbounded intervals.

\begin{figure}
\centering
\includegraphics[scale=1]{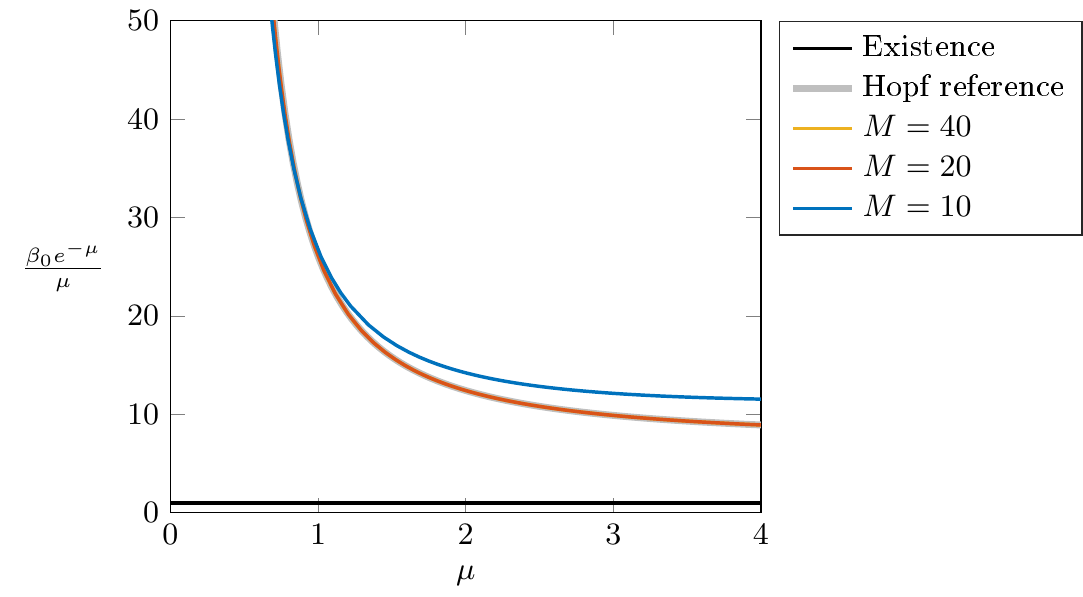}
\caption{Nicholson's blowflies model \eqref{Nich-RE}: Hopf bifurcation curve in the plane $(\mu,\beta_0 \ee^{-\mu}/\mu)$ approximated with MatCont, for fixed $\tau=10$ and different values of $M$. Note that the Hopf bifurcation curves for $M=20, 40$ are indistinguishable from each other and \fra{lie on top of the reference Hopf bifurcation curve calculated analytically in  \cite{Babette}}. No Hopf bifurcation was detected using $M\leq 8$.}
\label{f:Nich-fixed-tau}
\end{figure}

\begin{figure}
\centering
\includegraphics[scale=1]{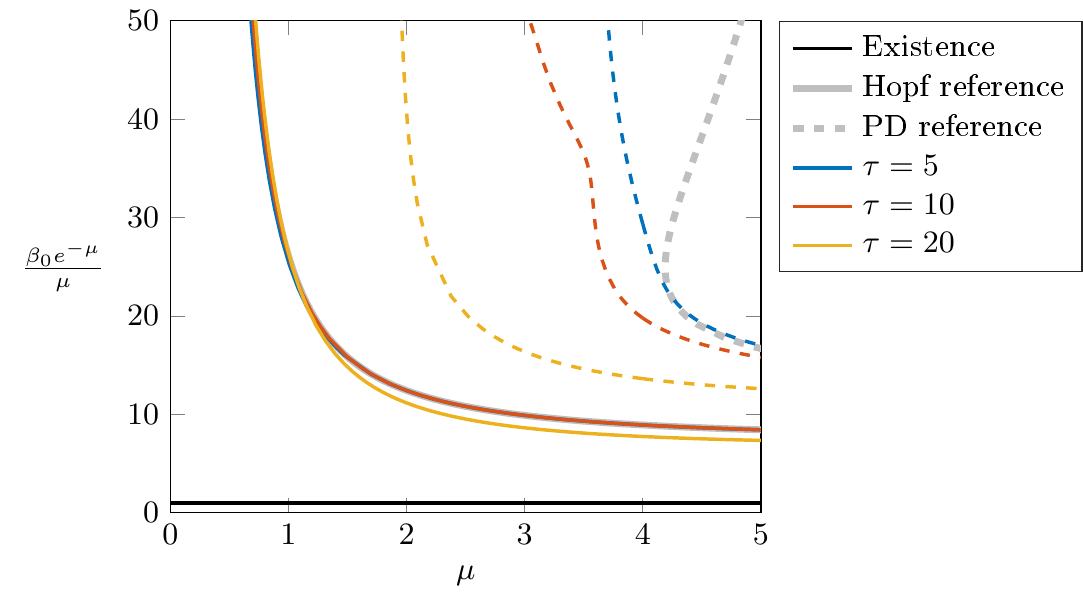}
\caption{Nicholson's blowflies model \eqref{Nich-RE}: Hopf and period doubling bifurcation curves in the plane $(\mu,\beta_0 \ee^{-\mu}/\mu)$ approximated with MatCont, for fixed $M=20$ and different values of $\tau$. The reference line for the Hopf bifurcation curve is calculated analytically in \cite{Babette}; the reference line for the period doubling bifurcation is approximated with DDE-BIFTOOL on the DDE formulation \eqref{Nich-DDE} of the model.}
\label{f:Nich-fixed-M}
\end{figure}

\subsection{A cannibalism equation}

\begin{figure}[p]
\centering
\includegraphics[width=.48\textwidth]{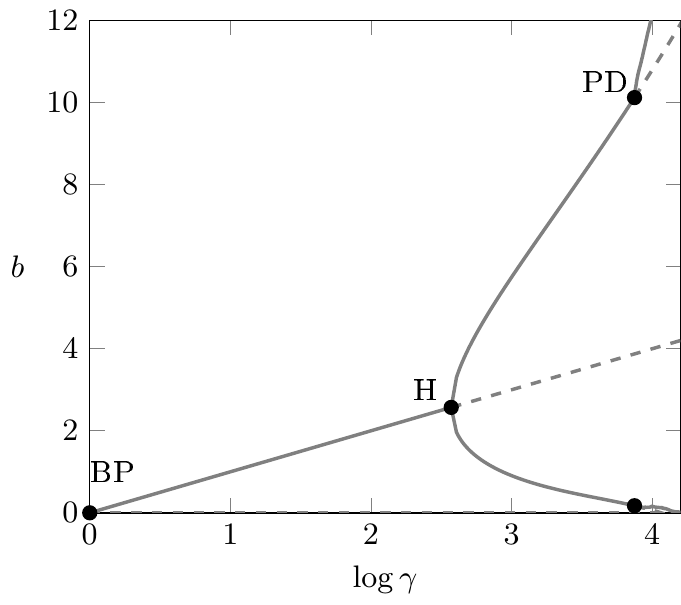}\quad
\includegraphics[width=.48\textwidth]{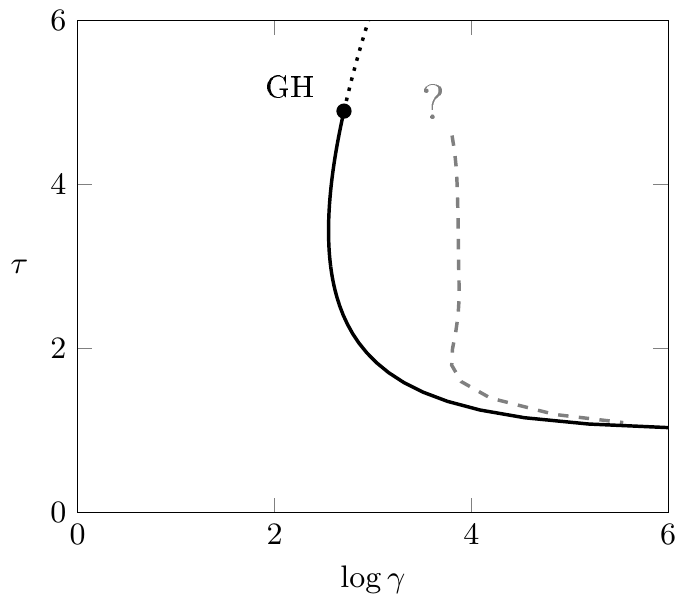}
\caption{Equation \eqref{specialRE}. Left: bifurcation diagram computed with $\tau=3$ and $M=20$ (equilibria and max/min values of the periodic solutions); solid and dashed curves are used to distinguish stable and unstable elements; note the Hopf bifurcation at $\log \gamma \approx 2.5708$ and the period doubling bifurcation at $\log \gamma \approx 3.8777$.
Right: stability curves in the plane $(\log\gamma,\tau)$, supercritical (black solid) and subcritical Hopf (black dotted), and period doubling (gray dashed); note the generalized Hopf point (GH) that characterizes the change in criticality. 
The period doubling curve in the right panel was approximated by performing a sequence of one-parameter continuations with respect to $\log\gamma$, each one for a fixed $\tau$;
we were however not able to complete the curve numerically using MatCont.
}
\label{f:specialRE_bif}
\end{figure}

\begin{figure}[p]
\centering
\includegraphics[width=.95\textwidth]{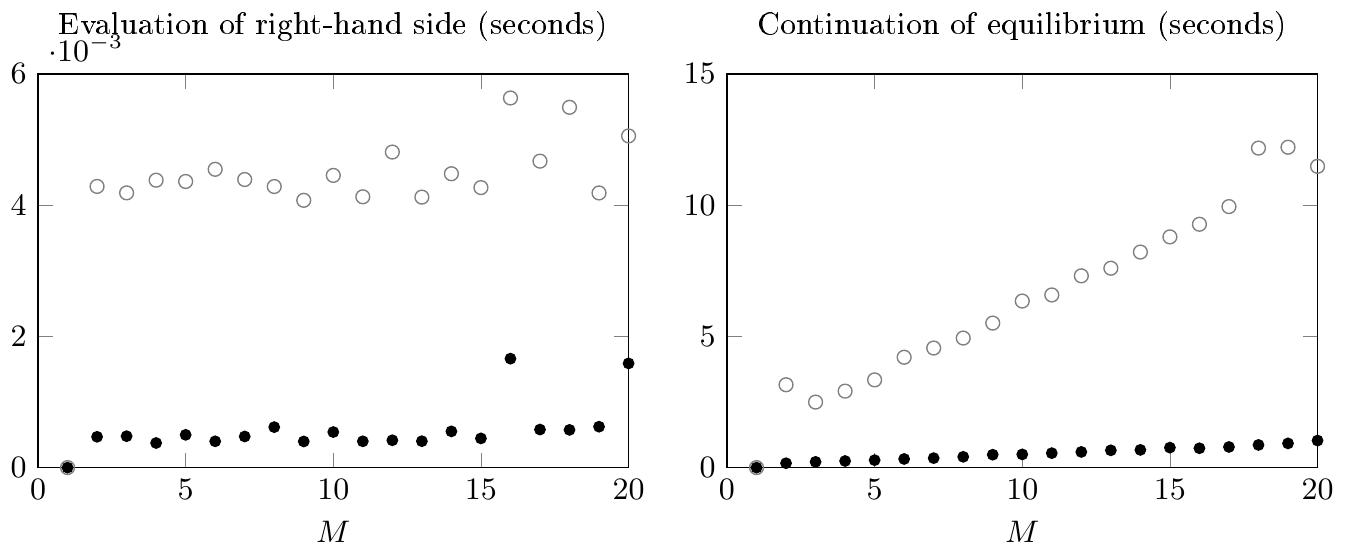} 
\caption{
Equation \eqref{specialRE}. Computation time (seconds) for one evaluation of the right-hand side of the approximating system (left), and for a $50$-point continuation along the branch of positive equilibria (right), varying $M$. 
Comparison between the method in \cite{SIADS2016} (gray $\circ$) and the method proposed here (black $\bullet$).
}
\label{f:specialRE_time}
\end{figure}

\begin{table}[p]
\centering
\caption{Equation \eqref{specialRE}. Computation time for one evaluation of the right-hand side (seconds $\times 10^{-4}$) of the approximating system and for a $50$-point continuation along the branch of positive equilibria (seconds), performed with the \fra{method in \cite{SIADS2016} and with the current method}, and ratio between the two.}
\footnotesize
\begin{tabular}{cccc|cccc}
\toprule
\multicolumn{4}{c|}{\textbf{RHS evaluation}} & \multicolumn{4}{c}{\textbf{Equilibrium continuation}} \\  
$M$ & Method \cite{SIADS2016} & Current method & Ratio 
	& $M$ & Method \cite{SIADS2016} & Current method & Ratio \\
 \midrule
$15$ & 40.14 & 5.65 & 7.10 
	& $15$ & 8.99 & 0.76 & 11.83 \\
$16$ & 43.61 & 6.17 & 7.07 
	& $16$ & 9.52 & 0.73 & 13.04 \\
$17$ & 61.66 & 6.38 & 8.05 
	& $17$ & 10.28 & 0.93 & 11.05 \\
$18$ & 49.17 & 5.13 & 9.58 	
	& $18$ & 10.80 & 0.89 & 12.13 \\
$19$ & 47.53 & 4.41 & 10.71  	
	& $19$ & 11.29 & 0.97 & 11.64 \\
$20$ & 48.77 & 4.75 & 10.27 	
	& $20$ & 11.73 & 1.10 & 10.66 \\
\bottomrule
\end{tabular}
\label{t:times}
\end{table}

Consider the equation
\begin{equation} \label{specialRE}
b(t) = \frac{\gamma}{2} \int_{1}^{\tau} b(t-s) \ee^{-b(t-s)} \diff s, \quad t \geq 0,
\end{equation}
for \fra{$\gamma > 0$} and $\tau>1$, modelling a cannibalism phenomenon in an age-structured population \cite{Breda2013}.
The bifurcation properties of \eqref{specialRE} were recently studied numerically in \cite{EJQTDE2016} using the method \cite{SIADS2016}.

The bifurcation diagram with respect to $\log \gamma$ for fixed $\tau=3$ is plotted in Figure \ref{f:specialRE_bif} (left), obtained with an approximating ODE system of dimension $M=20$.
The nontrivial equilibrium branch undergoes a Hopf bifurcation at $\log \gamma \approx 2.5708$, after which a stable branch of periodic solutions emerges.
The starting of a sequence of period doubling bifurcations is detected on the branch of periodic solutions, with the first period doubling bifurcation at $\log\gamma\approx 3.8777$.
Figure \ref{f:specialRE_bif} (right) shows the bifurcation curves in the plane $(\log\gamma,\tau)$. 
Note that, using a larger discretization index $M=20$, we could approximate the period doubling point with better accuracy than in \cite{EJQTDE2016}.

We used this example for some basic performance comparisons in terms of computational costs.
Figure \ref{f:specialRE_time} shows the computation times of the two approaches (current and \cite{SIADS2016}) for a single evaluation of the right-hand side of the approximating system, and for performing a $50$-point numerical continuation along the branch of positive equilibria using MatCont.
Note the remarkable improvement with the current method, which reduces computation times by approximately a factor 10, see also Table \ref{t:times}.



\section{Nonlinear renewal equations} \label{s:method}

In this section we present the approximation technique for a general scalar \fra{(possibly nonlinear)} RE.
Let $\tau>0$ denote the maximal delay. 
We will work with real-valued functions defined on the domain $[-\tau,0]$, so, for simplicity of notation, we will omit the domain when there is no confusion, writing for instance $L^1$ instead of $L^1([-\tau,0],\mathbb{R})$.

Consider the RE
\begin{equation} \label{RE_b}
b(t) = F(b_t), \qquad t > 0,
\end{equation}
where $F \colon L^1 \to \mathbb{R}$ is a 
\fra{globally Lipschitz continuous function}, and $b_t \in L^1$ denotes the history or state function 
$$ b_t(\theta) := b(t+\theta), \qquad \theta \in [-\tau,0].$$
Equation \eqref{RE_b} is provided with the initial condition
\begin{equation} \label{IC_b}
b(\theta)=\phi(\theta), \qquad \theta \in [-\tau,0],
\end{equation}
for $\phi\in L^1$.
Under these assumptions, the initial value problem \eqref{RE_b}\&\eqref{IC_b} has a unique (global) solution in $[-\tau,+\infty)$,  
which is continuous in $(0, +\infty)$ \cite[Theorem 3.8]{DGG2007}. In view of the analysis of the numerical approach, it is worthwhile 
to remark that $b_t \in C([-\tau,0],\mathbb{R})$ for $t >\tau.$

To efficiently apply the pseudospectral approach and approximately describe the qualitative behavior of the solutions of \eqref{RE_b}, in terms of stability and bifurcations, we want to associate with it an infinite-dimensional dynamical system where the rule for extension is represented explicitly, acting in a state space where point evaluation is well defined.

For this purpose we define
\begin{equation}\label{B}
B(t) := \int_0^t b(s) \diff s, \qquad t \geq -\tau,
\end{equation}
and, \fra{for all $t\geq 0$,}
\begin{equation} \label{v}
v(t)(\theta) := B(t+\theta)-B(t),\qquad \theta \in [-\tau,0],
\end{equation}
so that we get for all $t \geq 0$
\begin{equation} \label{dt}
\frac{\diff}{\diff t} v(t)(\theta) = b_t(\theta)-b_t(0)
\end{equation}
and
\begin{equation} \label{dtheta}
\frac{\diff}{\diff \theta} v(t) = b_t.
\end{equation}
\fra{We stress that \eqref{dtheta} should be interpreted as follows: presuming that $v(t)$ is an absolutely continuous function of $\theta$, it is almost everywhere differentiable and the derivative is Lebesgue integrable, so it defines an element of $L^1$, i.e., an equivalence class, denoted by $b_t$.
}

To derive an abstract differential equation from \eqref{dt}\&\eqref{dtheta}, inspired by the nonlinear theory in \cite{DGG2007} and by the theoretical framework in \cite{TwinSemigroups}, we introduce the space \fra{$NBV = NBV([-\tau,0],\mathbb{R})$ defined as}
\begin{align*}
NBV := \{ &\psi\in BV \fra{([-\tau,0],\mathbb{R})} \colon \psi(0)=0 \\
	&\text{ and $\psi$ is continuous from the right on the open interval } (-\tau,0) \}
\end{align*}
and we \fra{represent $L^1$ by} the subspace $AC_0$ of the absolutely continuous functions in $NBV$, i.e.,
$$ \fra{AC_0} = \{ \psi \in AC \colon \psi(0)=0\},$$
which is a Banach space with norm
$$ \|\psi\|_{NBV} = \|\psi'\|_{L^1} = \int_{-\tau}^0 |\psi'(\theta)| \diff \theta, \qquad \psi \in \fra{AC_0}.$$
The operator
\begin{equation*} 
(V \phi)(\theta) = -\int_\theta^0 \phi(s) \diff s, \qquad \phi \in L^1,\ \theta \in [-\tau,0],
\end{equation*}
maps an element of $L^1$ (i.e., an equivalence class) to a function in \fra{$AC_0$}, defining an embedding of $L^1$ into $NBV$. 
Although we defined $V$ with domain in $L^1$, in the following we will also consider the restrictions $V\big|_{NBV}$ and $V\big|_C$ to the spaces $NBV$ and $C$. We will sometimes omit the explicit restriction and use simply the notation $V$, since the appropriate domain should be clear from the context.
\fra{We can reformulate \eqref{B} and \eqref{v} as}
\begin{equation} \label{Vbt}
v(t) = V b_t.
\end{equation}
Note that for the constant function $\bar{b}$ we get \fra{$v(t)=\bar{v}$ with} $\bar{v}(\theta)= V \bar{b}(\theta) =\theta \bar{b}$, $\theta \in [-\tau,0],$ while $v(t) = V b_t$ is periodic when $b(t)$ is a periodic function.

\medskip
We now define the operator $C_0 \colon D(C_0) (\subset NBV) \to NBV$ as $C_0:= (V\big|_{NBV})^{-1}$. 
That is, we define
$$ D(C_0) = 
\{ \psi \in \fra{AC_0} \colon \psi = V\phi \text{ for some } \phi \in NBV \}
$$
and, for $\psi \in D(C_0)$, we take 
$$C_0 \psi = \{ \phi \colon \psi = V \phi\}.$$
The density of \fra{(the embedding of) $NBV$ in $L^1$ (defined by the identity, i.e., by associating to any $NBV$ element the equivalence class of functions that are almost everywhere equal to it)\footnote{For a proof of the density of (the embedding of) $NBV$ in $L^1$, consider $f \in L^1([-\tau,0])$, extended by 0 outside the interval $[-\tau,0]$. Let $g$ be defined by $g(\theta):= -\int_{\theta}^0 f(\sigma)\diff \sigma$, and, for $t>0$, let $h_t$ be defined by $h_t(\theta):= \frac{1}{t}[g(t+\theta)-g(\theta)]$. Then $g,h_t \in AC_0 \subset NBV$. From \cite[Appendix II, Theorem 2.3]{DiekmannBook}, we have $\| h_t - g'\|_{L^1} \to 0$ as $t\to 0^+$, with $g'=f$.}  
}
implies that $\fra{AC_0}=\overline{D(C_0)}$.
Note that $C_0$ is multivalued, since functions that differ only by a jump in $\theta=-\tau$ are mapped by $V$ to the same element of $NBV$. 
We could eliminate this ambiguity by considering the restriction of $V$ to $NBV_0$, defined as the subspace of $NBV$ consisting of functions that are continuous in $-\tau$. In this way, the inverse $(V\big|_{NBV_0})^{-1}$ is single-valued. Note that $AC_0 \subset NBV_0$, and $NBV_0$ naturally arises in the sun-star approach, as explained in \cite{DGG2007}. This restriction is not necessary for the numerical approach proposed in this manuscript, so we will keep working with the larger space $NBV$. 

\fra{For $v(t) \in D(C_0)$, the inverse relation \eqref{dtheta} can now be written as $b_t =$ ``$C_0 v(t)$'', where ``$\cdot$'' indicates that we take the equivalence class in $L^1$, to which all elements in $C_0 v(t)$ belong; however, the request that $v(t)$ belongs to $D(C_0)$ is stronger than needed, as \eqref{dtheta} is well defined also when $v(t) \in AC_0$.}

When we focus on restrictions to $NBV_0$, $C_0$ is the generator (in weak$^*$ sense) \cite{DiekmannBook} of the semigroup $\{S_0(t)\}_{t \geq 0}$ defined on $NBV$ by
$$ (S_0(t)\psi)(\theta) =
\begin{cases}
\psi(t+\theta), & t+\theta \leq 0, \\
0, & t+\theta>0.
\end{cases}
$$
The semigroup $\{S_0(t)\}_{t\geq 0}$ is not strongly continuous on $NBV$, but it is strongly continuous on the subspace $AC_0$.

To interpret the rule for extension \eqref{RE_b} as perturbation of the translation semigroup $\{S_0(t)\}_{t\geq 0}$, in analogy with the nonlinear theory treated in \cite{DGG2007} and the linear perturbation theory developed in \cite{TwinSemigroups}, we define $q \in NBV$ by
\begin{equation} \label{q}
 q(\theta) = \begin{cases}
0, & \theta =0, \\
-1, & \theta \in [-\tau,0),
\end{cases}
\end{equation}
i.e., $q$ is the Heaviside function that represents the Dirac measure in $\theta=0$. Note that $q \notin AC_0$ because of the discontinuity in $0.$

\medskip
For $\psi:=V\phi \in AC_0$, \eqref{dt}\&\eqref{dtheta}\&\eqref{RE_b}
is formally equivalent to the abstract Cauchy problem for $v(t) \in AC_0$
\begin{align}
\frac{\diff v(t)}{\diff t} &= C_0 v(t) + q F(C_0 v(t)), \qquad t\geq 0,  \label{ADE} \\
v(0) &= \psi, \label{IC_ADE}
\end{align}
in a sense that is made precise, for the linear case, in \cite{TwinSemigroups}.
In particular, given a (classical) solution $v$ of \eqref{ADE}\&\eqref{IC_ADE}, we reconstruct the solution of \eqref{RE_b}\&\eqref{IC_b} by defining 
$b(t)=F(C_0 v(t))$ for $t>0$, \fra{and \eqref{Vbt} holds}. 

It is then clear that the linear operator $C_0$ captures the translation, while the rule for extension is represented explicitly by the nonlinear perturbation $q F \circ C_0$ (where $\circ$ denotes composition).

\subsection{Reduction to ODE via pseudospectral discretization}

To approximate the solution $v(t)$ of \eqref{ADE}\&\eqref{IC_ADE}, we fix a \emph{discretization index} $M\in \mathbb{N}$ and consider a set of points $\Theta_M =\{\theta_1,\dots,\theta_M\} \subset [-\tau,0)$ and $\theta_0=0$, with
$$-\tau \leq \theta_M < \cdots < \theta_1 < \theta_0=0.$$ 
In the numerical simulations we consider the \emph{Chebyshev zeros}, i.e., the roots of the Chebyshev polynomial of the first kind of degree $M$ \cite{Xu2016}, transformed to the interval $(-\tau,0)$, which are given explicitly by
\begin{equation}\label{cheb_zeros}
\theta_j = \frac{\tau}{2} \left( \cos \left( \frac{2j-1}{2M}\pi \right) - 1 \right), \qquad j=1,\dots,M.
\end{equation}
Let $\ell_j$, $j=0,\dots,M$, be the Lagrange polynomials associated with $\{\theta_0=0\} \cup \Theta_M$, which are defined by
$$ \ell_j(\theta) := \prod_{\substack{k=0\\ k\neq j}}^M \frac{\theta-\theta_k}{\theta_j - \theta_k}, \qquad \theta \in [-\tau,0].$$
We recall that $\ell_j(\theta_k)=\delta_{jk}$, where $\delta_{jk}$ denotes the Kronecker symbol, and that, for all $\theta \in [-\tau,0]$, the Lagrange polynomials have the properties
\begin{align*}
\sum_{j=0}^M \ell_j(\theta) =1, \qquad 
\sum_{j=0}^M \ell_j'(\theta)=0,
\end{align*}
and form a basis of the space of polynomials of degree less than or equal to $M$.
Using the Lagrange basis, the unique $M$-degree polynomial $p$ interpolating a function $\varphi$ with well-defined point values on the nodes $\{0\} \cup \Theta_M$ can be expressed by
$$ p(\theta) = \sum_{j=0}^M \varphi(\theta_j) \ell_j(\theta), \qquad \theta\in [-\tau,0].$$

To obtain a finite dimensional approximation of \eqref{ADE} we need to approximate the operator $C_0$ and the function $F$.
To do this, we introduce $P_M \colon \mathbb{R}^M \to NBV$, the interpolation operator on $\{0\} \cup \Theta_M$ with value zero in $\theta_0=0$, and $R_M \colon NBV \to \mathbb{R}^M$, the restriction on $\Theta_M$. More precisely,
\begin{align}
P_M y &:=  \sum_{j=1}^M y_j \ell_j, \qquad y \in \mathbb{R}^M, \label{defP}\\
(R_M \varphi)_j &:= \varphi(\theta_j), \qquad \varphi \in NBV,\ j=1,\dots,M. \notag 
\end{align}
The operator $C_0$ is approximated by pseudospectral differentiation, i.e., by the finite dimensional operator $D_M \colon \mathbb{R}^M \to \mathbb{R}^M$ defined by 
$$ D_M := R_M C_0 P_M.$$
At this point, the choice of a mesh like \eqref{cheb_zeros}, which does not include the point $-\tau$, is even better justified, as the multi-valuedness of $C_0$ disappears in the pseudospectral approximation. If the point $-\tau$ belongs to $\Theta_M$, then we should define $C_0$ as the (single-valued) inverse of the restriction of $V$ on $NBV_0$, so that $D_M$ is well defined. 

To write explicitly the operator $D_M$, we consider a vector $y=(y_1,\dots,y_M) \in \mathbb{R}^M$. Then $P_M y$ is given by \eqref{defP} and, for all $k=1,\dots,M$,
\begin{align*}
(D_M y)_k &= (C_0 P_M y)(\theta_k) \\
	&= \sum_{j=1}^M y_j \ell_j'(\theta_k).
\end{align*}
The entries of the matrix $D_M$ are therefore given explicitly by
$$ (D_M)_{kj} := \ell_j'(\theta_k), \qquad k,j=1,\dots,M.$$
Note that the elements $(D_M)_{kj}$ are entries of the differentiation matrix associated with the mesh $\{0\} \cup \Theta_M$ \cite{Trefethen2000}.
More precisely, $D_M$ is the submatrix of the differentiation matrix accounting for the fact that functions are zero in zero.

\medskip
To approximate the perturbation $qF\circ C_0$, which captures the rule for extension, we let $\mathbf{1} \in \mathbb{R}^M$ denote the vector with all the entries equal to $1$ \fra{(note that the vector $-\mathbf{1}$ is the discretization of $q$)}, and introduce $F_M \colon \mathbb{R}^M \to \mathbb{R}$ by
$$ F_M := F \circ C_0 P_M.$$
Using \eqref{defP}, for $y \in \mathbb{R}^M$ we have
$$ F_M(y) = F(\sum_{j=1}^M  \ell_j' y_j).$$
Then, $qF\circ C_0$ is approximated by $-\mathbf{1} F_M$.

\medskip
Putting everything together, we obtain the following finite dimensional ODE system
\begin{equation} \label{ODEM}
\frac{\diff x}{\diff t} = D_M x - F_M (x) \mathbf{1},
\end{equation}
for $x(t) \in \mathbb{R}^M$, $t \geq 0.$ By construction, the $(M-1)$-degree polynomial $C_0 P_Mx(t)= \sum_{j=1}^M \ell_j' x_j (t)$ furnishes an approximation of $b_t$, while $F_M (x(t))$ approximates $b(t)$ in \eqref{RE_b}. The computational advantage of \eqref{ODEM} with respect to the ODE system derived in \cite{SIADS2016} is evident: the nonlinear contribution appears explicitly as a perturbation of the linear system, and there is no need to solve a nonlinear equation to impose the domain condition. 

The dynamics of \eqref{ODEM} can be investigated efficiently by using available software for numerical continuation and bifurcation study. It is therefore important to understand to which extent \eqref{ODEM} mimics the dynamics of the original infinite dimensional dynamical system described by the RE \eqref{RE_b}. Here we examine the equilibria and their stability properties.

\subsection{Correspondence of equilibria and linearized equations}
In what follows we assume that the operator $F$ in \eqref{RE_b} is continuously Fr\'echet differentiable and we focus on constant solutions and the corresponding linearized equations.
\fra{We slightly abuse notation by using the symbol $\overline b$ to represent both a real number and the constant function (defined on $[-\tau,0]$) taking that number as its one and only value.}

\begin{theorem}
The equilibria of \eqref{RE_b} and \eqref{ODEM} are in one-to-one correspondence. 
More precisely, if $\overline{b}=F(\overline{b})$, then $\overline{x} \in \mathbb{R}^M$ with
\begin{equation}\label{equil}
\overline{x}_j = \overline b \, \theta_j, \qquad j=1,\dots,M,
\end{equation}
satisfies $D_M \overline{x}- F_M(\overline{x}) \mathbf{1} = 0$ and so defines an equilibrium of \eqref{ODEM};
vice versa, if $D_M \overline{x} - F_M(\overline{x}) \mathbf{1} = 0$ then $\overline{b} = \overline{x}_j/\theta_j$ does not depend on $j$ and satisfies $\overline{b}=F(\overline{b})$.
\end{theorem}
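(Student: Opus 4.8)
The plan is to read the equilibrium condition for \eqref{ODEM} directly off the interpolating polynomial $p := P_M \overline{x}$. By construction $p$ has degree at most $M$ and satisfies $p(0)=0$, and $C_0 P_M \overline{x} = p'$ is a polynomial of degree at most $M-1$. The identity $D_M \overline{x} - F_M(\overline{x})\mathbf{1} = 0$ amounts to $p'(\theta_k) = F(p')$ for every $k=1,\dots,M$, i.e.\ $p'$ takes one and the same value $c := F(p')$ at all $M$ nodes of $\Theta_M$. I would then reduce everything to two elementary facts: that $P_M$ reproduces affine data exactly, and that a polynomial of degree at most $M-1$ that is constant on $M$ distinct points is globally constant.

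For the forward implication I would start from $\overline{b}=F(\overline{b})$, set $\overline{x}_j = \overline{b}\,\theta_j$ as in \eqref{equil}, and observe that $\theta \mapsto \overline{b}\,\theta$ is an affine function vanishing at $\theta_0 = 0$. Since $P_M$ interpolates on the $M+1$ distinct nodes $\{0\}\cup\Theta_M$, it reproduces this function exactly, so $p = P_M \overline{x}$ equals $\overline{b}\,(\cdot)$ and $p' = C_0 P_M \overline{x} \equiv \overline{b}$ as a constant function. Evaluating at the nodes gives $D_M \overline{x} = \overline{b}\,\mathbf{1}$, while $F_M(\overline{x}) = F(\overline{b}) = \overline{b}$, so the right-hand side of \eqref{ODEM} vanishes and $\overline{x}$ is an equilibrium.

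For the converse I would set $c := F_M(\overline{x}) = F(p')$ and rewrite the equilibrium relation as $p'(\theta_k)=c$ for all $k=1,\dots,M$. The polynomial $p' - c$ then has degree at most $M-1$ but at least $M$ distinct roots, hence vanishes identically, so $p' \equiv c$. Integrating and using $p(0)=0$ forces $p(\theta)=c\,\theta$, whence $\overline{x}_j = p(\theta_j) = c\,\theta_j$; since each $\theta_j \in [-\tau,0)$ is nonzero, the ratio $\overline{x}_j/\theta_j = c$ is independent of $j$. Setting $\overline{b}:=c$ and using $c = F(p') = F(\overline{b})$ --- with $\overline{b}$ read as the corresponding constant function according to the convention stated just before the theorem --- delivers $\overline{b}=F(\overline{b})$.

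The step I expect to be the crux is the degree count in the converse: the nonlinear part of \eqref{ODEM} only forces $p'$ to be constant \emph{at the nodes}, and it is the deliberate mismatch between the degree ($\leq M-1$) of $p'$ and the number ($M$) of interpolation nodes in $\Theta_M$ that upgrades this nodal constancy to genuine constancy of $p'$. Everything else is bookkeeping: the forward direction is immediate once one notes that the equilibrium profile $\overline{b}\,\theta$ lies in the range on which $P_M$ acts as the identity.
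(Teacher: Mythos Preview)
Your proposal is correct and follows essentially the same approach as the paper's proof. The paper first isolates the identity $1=\sum_{j=1}^M \theta_j \ell_j'$ (which is exactly your ``$P_M$ reproduces affine data'') and then runs the same degree-counting argument in the converse direction, noting that $\overline{p}_{M-1}:=\sum_j \overline{x}_j\ell_j'$ takes the value $\overline{b}$ at $M$ distinct nodes and hence is constant.
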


\begin{proof}
We first derive a useful identity.
Since $M$-degree polynomial interpolation is exact on all polynomials with degree equal to or less than $M$, for the identity function we can write
$$ \theta = \sum_{j=1}^M \theta_j \ell_j(\theta), \qquad \theta \in [-\tau,0].
$$
By differentiation we then obtain
\begin{equation} \label{1_sum_der}
1 = \sum_{j=1}^M \theta_j \ell_j'. 
\end{equation}

Assume first $\overline{b}=F(\overline b)$, and define $\overline x$ by \eqref{equil}. From \eqref{1_sum_der},
$$ 
\overline b = \overline b \sum_{j=1}^M \theta_j \ell_j'(\theta)$$
for all $\theta \in [-\tau,0]$ and, \fra{consequently,} 
\fra{$$[D_M \overline x]_k = \sum_{j=1}^M \ell_j'(\theta_k) \overline x_j = \overline b \sum_{j=1}^M \theta_j \ell_j'(\theta_k) = \overline b,$$
i.e., }
$D_M \overline x = \overline b \, \mathbf{1}$.
Therefore we conclude
$$ D_M \overline{x} - F_M(\overline{x})\mathbf{1} = D_M \overline x - F(\overline b \sum_{j=1}^M \theta_j \ell_j')\, \mathbf{1} = \overline b \, \mathbf{1} - F(\overline b)\, \mathbf{1} =0,$$
\fra{or, in words,} $\overline x$ is an equilibrium of \eqref{ODEM}.

Vice versa, assume that $\overline x \in \mathbb{R}^M$ satisfies
\begin{equation} \label{eq_ode}
 D_M \overline x - F_M(\overline{x}) \,\mathbf{1} = 0,
\end{equation}
and define $\overline b := F_M(\overline{x}) = F(\overline{p}_{M-1})$, with $\overline{p}_{M-1}:= \sum_{j=1}^M \overline x_j \ell_j'$. Note that $\overline{p}_{M-1}$ is a polynomial with degree $M-1$ and, by definition, $(D_M \overline x)_k = (\sum_{j=1}^M \overline x_j \ell_j')(\theta_k) = \overline{p}_{M-1}(\theta_k)$, $k=1,\dots,M$.
By \eqref{eq_ode}, we conclude that $\overline{p}_{M-1}$ takes the value $\overline{b}$ on $M$ distinct points and therefore $\overline{p}_{M-1}$ is identically equal to $\overline b$. It now follows from the definition of $\overline{b}$ that $\overline{b}=F(\overline{b})$.
\end{proof}

\begin{theorem}
The operations of linearization around an equilibrium and pseudospectral discretization commute.
\end{theorem}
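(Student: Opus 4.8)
The plan is to make both orderings fully explicit and then verify that they yield the identical linear ODE, the whole statement reducing to a single chain-rule computation. Write $L := DF(\overline b)$ for the Fréchet derivative of $F$ at the equilibrium $\overline b$, a bounded linear functional on $L^1$. First I would follow the \emph{linearize-then-discretize} route (call it Path~A): linearizing the RE \eqref{RE_b} around $\overline b$ gives $y(t) = L\,y_t$, whose abstract formulation, built exactly as \eqref{ADE} was (the only nonlinearity being $F\circ C_0$, and $C_0\overline v = \overline b$ at the equilibrium $\overline v = V\overline b$), is $\dot w = C_0 w + q\,L(C_0 w)$. Applying to this the same discretization recipe that produced \eqref{ODEM} — namely $C_0 \mapsto D_M = R_M C_0 P_M$, $q \mapsto -\mathbf{1}$, and $F\circ C_0 \mapsto F\circ C_0 P_M$, here with $L$ in place of $F$ — gives
\[ \dot{z} = D_M z - \mathbf{1}\, L(C_0 P_M z). \]
The competing \emph{discretize-then-linearize} route (Path~B) is to linearize the finite-dimensional system \eqref{ODEM} directly at the equilibrium $\overline x$ of \eqref{equil}, producing $\dot z = D_M z - \mathbf{1}\,(DF_M(\overline x)\,z)$. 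The theorem is precisely the claim that these agree, i.e.\ that $DF_M(\overline x) = L\circ C_0 P_M$.

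The heart of the argument is then immediate. Since $F_M = F\circ C_0 P_M$ and $C_0 P_M$ is a fixed linear operator, the chain rule gives $DF_M(\overline x) = DF(C_0 P_M\overline x)\circ C_0 P_M$. I would next evaluate the base point using \eqref{equil} together with the identity \eqref{1_sum_der}:
\[ C_0 P_M \overline{x} = \sum_{j=1}^M \overline{x}_j\, \ell_j' = \overline{b}\sum_{j=1}^M \theta_j\, \ell_j' = \overline{b}, \]
the constant function. Hence $DF(C_0 P_M\overline x) = DF(\overline b) = L$, and therefore $DF_M(\overline x) = L\circ C_0 P_M$, which is exactly the scalar coefficient appearing in Path~A. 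Both paths share the same linear part $D_M$ and the same range vector $-\mathbf{1}$, so this identification of coefficients completes the proof.

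I expect the genuine delicacy to be conceptual rather than computational: one must fix unambiguously what ``discretize the linearized abstract equation'' means, so that Path~A is well posed and the linear functional $L$ is seen to play exactly the role $F$ played in passing from \eqref{ADE} to \eqref{ODEM}. Once this consistency is granted, the result is essentially the observation that the interpolation operators $P_M$ and $R_M$ are linear and independent of the equation, so differentiation in the state variable commutes with them, and the sole nonlinearity, carried by $F$, is handled by the chain rule above. The remaining verifications — that the base point $\overline b$ for $DF$ is correctly identified and that the equilibrium correspondence of the previous theorem places $\overline x$ where \eqref{equil} says — are routine and require no new idea.
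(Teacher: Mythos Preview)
Your proposal is correct and follows essentially the same approach as the paper: both compute the linearization in each order, apply the chain rule to $F_M = F\circ C_0 P_M$, and use the identity \eqref{1_sum_der} together with \eqref{equil} to show $C_0 P_M\overline x = \overline b$, so that $DF_M(\overline x) = DF(\overline b)\circ C_0 P_M$. The paper's version is terser and works directly with the sum $\sum_j x_j\ell_j'$ rather than naming the operator $C_0 P_M$, but the argument is the same.
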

\begin{proof}
Let $\overline b$ be an equilibrium of \eqref{RE_b}. The linearization around $\overline b$ reads
$$ b(t) = DF(\overline b) \, b_t,$$
and the corresponding approximating ODE system is
\begin{align}
\frac{\diff x}{\diff t} &= D_M x - \big[ DF(\overline b) \sum_{j=1}^M x_j \ell'_j \big] \, \mathbf{1}. \label{lin_discr}
\end{align}

On the other hand, let $\overline x$ be an equilibrium of \eqref{ODEM} with \eqref{equil}.
The linearization of \eqref{ODEM} around $\overline x$ reads
\begin{align*}
\frac{\diff x}{\diff t} &= D_M x - [DF_M(\overline{x}) x] \mathbf{1} \\
	&= D_M x - \Big[ DF(\overline b \, \sum_{j=1}^M \theta_j \ell_j') \sum_{j=1}^M x_j \ell_j'\Big] \, \mathbf{1} \\
	&=D_M x - \Big[ DF(\overline b)  \sum_{j=1}^M x_j \ell_j'\Big] \,\mathbf{1},
\end{align*}
where in the last step we used \eqref{1_sum_der}. Hence the linearized system coincides with \eqref{lin_discr}.
\end{proof}

The principle of linearized stability \cite{DGG2007} ensures that the local stability properties of an equilibrium $\overline{b}$ of \eqref{RE_b} are determined by the stability properties of the zero solution of the system linearized around $\overline{b}$, if the equilibrium is hyperbolic.
Therefore the next step in order to study whether the finite dimensional ODE \eqref{ODEM} approximates the stability properties of the equilibria of \eqref{RE_b} is to focus on linear equations.

\section{The linear case: convergence analyses} \label{s:linear_conv}

In this section we focus on the linear case and study the convergence of exponential solutions, which are related to stability.
We consider the linear RE
\begin{equation} \label{linear_RE_b}
b(t) = \int_0^\tau k(a) b(t-a) \diff a, \qquad t > 0,
\end{equation}
where $k$ is a bounded measurable function on $[0,\infty)$ with support in $[0,\tau]$. 

\medskip
Following \cite{TwinSemigroups}, let $\langle \cdot , \cdot \rangle$ denote the pairing between bounded measurable functions and $NBV$, i.e.,
$$ \langle k, \psi \rangle := \int_{-\tau}^0 k(-a) \psi(\diff a), \qquad \psi \in NBV.$$
In particular, if $\psi \in \fra{AC_0}$, the pairing reads
$$ \langle k, \psi \rangle = \int_{-\tau}^0 k(-a) \psi'(a) \diff a.$$
By defining $K_M$ as the row vector with components
\begin{equation} \label{KM}
(K_M)_j := \langle k, \ell_j \rangle = \int_{-\tau}^0 k(-a) \ell_j'(a) \diff a, \qquad j=1,\dots,M,
\end{equation}
we can write the ODE system \eqref{ODEM} approximating \eqref{linear_RE_b} as
\begin{equation} \label{linear_ODEM}
\frac{\diff x}{\diff t} = D_M x - (K_M x) \, \mathbf{1}.
\end{equation}

We want to study in which sense the finite dimensional approximation \eqref{linear_ODEM} captures the stability properties of the zero equilibrium of \eqref{linear_RE_b} as $M\to \infty$. 
The asymptotic stability of the zero equilibrium is determined by the roots of the characteristic equation, a.k.a.\ ``characteristic roots''.
Specifically, \eqref{linear_RE_b} has an exponential solution of the form $b(t)= \alpha \ee^{\lambda t}$ with $\alpha \neq 0$ if and only if $\lambda \in \mathbb{C}$ is a root of the characteristic equation
\begin{equation}  \label{CE}
1 = \int_0^\tau k(a) \ee^{-\lambda a} \diff a.
\end{equation}
The real parts of the characteristic roots $\lambda$ determine the stability of the zero solution of \eqref{linear_RE_b}: if $\Re\lambda <0$ for all $\lambda$, then the zero equilibrium is asymptotically stable; if there exists a root $\lambda$ with $\Re\lambda>0$, then the zero equilibrium is unstable.

\medskip
From now on we assume that the set $\Theta_M$ is chosen such that the associated Lebesgue constant, defined as 
$$\tilde\Lambda_M := \max_{\theta \in [-\tau,0]} \sum_{j=1}^M |\tilde \ell_j(\theta)|,$$
for $\tilde \ell_j$ Lagrange polynomials associated with $\Theta_M$, satisfies
\begin{equation} \label{cond_Lebesgue}
\lim_{M\to \infty} \frac{\tilde \Lambda_M}{M} = 0.
\end{equation} 
We remark that \eqref{cond_Lebesgue} is true for instance for the nodes \eqref{cheb_zeros}, for which $\tilde \Lambda_M = O(\log M)$ \cite[Chapter 1.4.6]{MastroianniBook}.
We also remark that including additional nodes, for instance the extremum $-\tau$, does not affect the validity of \eqref{cond_Lebesgue}, \cite[Chapter 4.2]{MastroianniBook}.

The other important ingredient for convergence is the regularity of the function $\varphi$: the smoother a function, the faster the convergence of its interpolants.
More specifically, let $\LL_{M-1}$ denote the polynomial interpolation operator on $\Theta_M$, such that $\LL_{M-1} \varphi$ is the unique $(M-1)$-degree polynomial with $[\LL_{M-1}\varphi] (\theta_j) = \varphi(\theta_j)$, $j=1,\dots,M$.
Although $\LL_{M-1}$ can be defined also for bounded variation functions, and some results for interpolation in $L^p$ norm exist \cite{Mastroianni2010}, we here require that $\varphi$ is at least continuous, and use error bounds of polynomial interpolation in terms of the uniform norm.
In the following, let $c$ denote a constant (different from time to time) independent of $M$. 
If $\varphi$ is Lipschitz continuous, the bound
\begin{equation} \label{interp_error_sup}
 \| (I - \LL_{M-1})\varphi \|_{\infty} \leq c \, \frac{\tilde \Lambda_M}{M} \, \text{Lip}(\varphi) 
 \end{equation}
holds, where Lip$(\varphi)$ denotes the Lipschitz constant of $\varphi$, \fra{and, under the assumption \eqref{cond_Lebesgue}, the right-hand side of \eqref{interp_error_sup} tends to zero as $M \to \infty$.}
The order of convergence depends on the regularity of the interpolated function. More precisely, the estimate
$$ \| (I - \LL_{M-1})\varphi \|_{\infty} \leq c \, \frac{\tilde \Lambda_M}{M^k} \, \|\varphi^{(k)}\|_\infty $$
holds for $\varphi \in C^k$, 
see for instance \cite[Chapters 1.2 and 1.4]{MastroianniBook}.
For analytic functions, the uniform error of polynomial interpolation behaves as $O(\rho^{-M})$ for some $\rho>1$, a phenomenon called \emph{spectral accuracy}, or \emph{geometric convergence}, or \emph{exponential convergence} \cite{MastroianniBook,Trefethen2013,Trefethen2000}.
Finally we remark that the uniform convergence of polynomial interpolation holds also for absolutely continuous functions interpolated at Chebyshev zeros \cite{Krylov}. 

Before studying the approximation error, we note that the numerical computation of the vector $K_M$ in \eqref{KM} may involve the use of quadrature formulas to approximate the integrals. These approximations can introduce numerical errors in addition to those due to the pseudospectral approximation. In the following, we ignore these errors by assuming that the integrals are computed exactly.

\subsection{Resolvent operators}

Before focusing on characteristic roots, we study more in general the convergence of the pseudospectral approximations of the resolvent operators of $C_0$. Although the convergence of resolvent operators is not directly used to prove the convergence of the stability criteria of equilibria, it is fundamental to set the basis for a broader study of the approximation error, as solution operators are strictly connected with the resolvent of their generator via the Laplace transform.
For every $\lambda \in \mathbb{C}$ and $\varphi \in NBV$, the resolvent operator of $C_0$ is given by
\begin{equation} \label{resolvent}
((\lambda I-C_0)^{-1}\varphi) (\theta) = \ee^{\lambda \theta} \int_{\theta}^0 \ee^{-\lambda s} \varphi(s) \diff s, \quad \theta \in [-\tau,0].
\end{equation}
\fra{In the following analysis of the approximation error, we require that $\varphi \in NBV$ is continuous.}

\begin{lemma} \label{l:res}
\fra{Let $\varphi \in NBV \cap C$, let $B$ be a bounded open subset of $\mathbb{C}$, and assume that \eqref{cond_Lebesgue} holds. 
There exists $\overline{M}(B)$ such that, for any index $M \geq \overline{M}(B)$ and $\lambda \in B$, the polynomial 
\begin{equation}
 \label{pMcoll} 
 p_M :=P_M(\lambda I - D_M)^{-1} R_M \varphi
\end{equation}
is well defined, and, for $\psi := (\lambda I - C_0)^{-1} \varphi,$
\begin{equation} \label{psi-p}
\| \psi - p_M \|_{NBV} \leq c(B) \; \big\| r_M(\lambda,\varphi) \big\|_{\infty},
\end{equation}
where $ r_M(\lambda,\varphi) := (I-\LL_{M-1}) \psi'$ and 
$$ c(B) = 2 \tau \, \sup_{\lambda \in \overline{B}} \left( 1 + |\lambda| \frac{1-\ee^{-\tau \Re \lambda}}{\Re \lambda} \right).
$$
%
%
%
}
\end{lemma}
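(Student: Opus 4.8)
The plan is to read off from $D_M = R_M C_0 P_M$ the exact identity $\lambda I_M - D_M = R_M(\lambda I - C_0)P_M$, which holds because $R_M P_M = I$ on $\mathbb{R}^M$. This does two jobs at once. First, whenever $p_M$ in \eqref{pMcoll} exists it is exactly the unique polynomial of degree $\le M$ vanishing at $\theta_0 = 0$ that satisfies the collocation conditions $\lambda p_M(\theta_k) - p_M'(\theta_k) = \varphi(\theta_k)$, $k=1,\dots,M$ (recall $C_0$ acts as differentiation on $AC_0$ and $P_M y$ is such a polynomial). Second, it lets me compare the discrete resolvent $A_M := P_M(\lambda I - D_M)^{-1}R_M$ directly with the continuous resolvent $(\lambda I - C_0)^{-1}$, whose explicit form \eqref{resolvent} I will use throughout.

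For the error identity I would introduce the comparison polynomial $P$ of degree $\le M$ with $P(0)=0$ and $P' = \LL_{M-1}\psi'$, the interpolant of $\psi'$. Since $(\psi - P)' = (I-\LL_{M-1})\psi' = r_M$ and $(\psi - P)(0)=0$, one has $\psi - P = V r_M$, so $\|\psi - P\|_{NBV} = \|r_M\|_{L^1}\le\tau\|r_M\|_\infty$: this is the consistency contribution. Applying the resolvent identity to $R_M P$ and using that $r_M$ vanishes at the nodes (so $\LL_{M-1}\psi'$ agrees with $\psi'$ there), a short computation gives $(\lambda I - D_M)R_M P = R_M\varphi - \lambda R_M(V r_M)$, and hence $\psi - p_M = V r_M - \lambda A_M(V r_M)$. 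I would then invoke the elementary identity $h - \lambda(\lambda I - C_0)^{-1}h = -(\lambda I - C_0)^{-1}h'$, verified directly from \eqref{resolvent} with $h = V r_M$ and $h' = r_M$ (both sides continuous), which turns the leading part into $-(\lambda I - C_0)^{-1}r_M$ and isolates the discrete correction:
\[
\psi - p_M = -(\lambda I - C_0)^{-1}r_M + \lambda\big[(\lambda I - C_0)^{-1} - A_M\big](V r_M).
\]
From \eqref{resolvent} the kernel bound $\sup_\theta\int_\theta^0|\ee^{\lambda(\theta-s)}|\diff s = (1-\ee^{-\tau\Re\lambda})/\Re\lambda$ gives $\|(\lambda I - C_0)^{-1}\|_{C\to NBV}\le\tau\big(1 + |\lambda|(1-\ee^{-\tau\Re\lambda})/\Re\lambda\big) = \tfrac12 c(B)$, so the first term is bounded by $\tfrac12 c(B)\|r_M\|_\infty$.

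The crux is the stability estimate: to produce $\overline M(B)$ such that $\lambda I - D_M$ is invertible for all $\lambda\in B$, $M\ge\overline M(B)$, with $\|A_M\|_{C\to NBV}\le 2\sup_{\lambda\in\overline B}\|(\lambda I - C_0)^{-1}\|_{C\to NBV}$. I would obtain this by a Neumann/perturbation argument built on the same resolvent identity: using $R_M (\lambda I - C_0)^{-1} P_M$ as an approximate inverse one finds $(\lambda I - D_M)R_M(\lambda I - C_0)^{-1}P_M = I_M + E_M$, where the defect $E_M := R_M(\lambda I - C_0)(\Pi_M - I)(\lambda I - C_0)^{-1}P_M$ (with $\Pi_M := P_M R_M$) measures how far interpolation commutes with $\lambda I - C_0$ on the smooth functions $(\lambda I - C_0)^{-1}P_M y$. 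The whole weight of the argument is to show $\|E_M\|\to 0$ uniformly for $\lambda\in\overline B$: this is where hypothesis \eqref{cond_Lebesgue} enters, through the interpolation error bound \eqref{interp_error_sup} applied to the resolvent images, and where compactness of $\overline B$ delivers a single threshold $\overline M(B)$. Once $\|E_M\|\le\tfrac12$, the Neumann series yields invertibility and the factor-$2$ bound on $A_M$; the correction term $\lambda[(\lambda I - C_0)^{-1}-A_M](V r_M)$ is then absorbed for $M\ge\overline M(B)$, and combining with the consistency estimate produces $\|\psi - p_M\|_{NBV}\le c(B)\|r_M\|_\infty$.

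I expect the delicate point to be precisely this uniform estimate $\|E_M\|\to 0$: because differentiation and interpolation do not commute, at the nodes $E_M$ reduces to the \emph{derivative} of an interpolation error, so every term must be routed through a quantity of the form $\big((I-\Pi_M)g\big)'$ and bounded via \eqref{interp_error_sup} together with the regularity that \eqref{resolvent} confers on $g=(\lambda I - C_0)^{-1}P_M y$, uniformly over the compact set $\overline B$. The choice of the comparison polynomial $P$ (the antiderivative of $\LL_{M-1}\psi'$ rather than the interpolant of $\psi$) is what keeps $r_M = (I-\LL_{M-1})\psi'$ as the single, convenient quantity measuring consistency in the final bound.
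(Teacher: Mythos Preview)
Your route diverges substantially from the paper's, and the point you yourself flag as ``delicate'' is where it breaks. The paper does not build an approximate inverse for $\lambda I - D_M$; instead it writes the error equation directly for $e_M' = \psi' - p_M'$ in $C$: from $\psi' = \lambda\psi - \varphi$ and $p_M' = \LL_{M-1}(\lambda p_M - \varphi)$ one subtracts, uses $e_M = V e_M'$, and obtains
\[
(I - \lambda \LL_{M-1} V)\, e_M' = (I-\LL_{M-1})\psi' = r_M.
\]
Invertibility then follows from Banach's perturbation lemma applied to $(I-\lambda V)$, because the perturbation $\lambda(I-\LL_{M-1})V$ satisfies $\|(I-\LL_{M-1})V\varphi\|_\infty \le c\,\tfrac{\tilde\Lambda_M}{M}\|\varphi\|_\infty$ directly from \eqref{interp_error_sup}, since $V\varphi$ is Lipschitz with constant $\|\varphi\|_\infty$. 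This is the only place \eqref{cond_Lebesgue} enters. The bound $\|(I-\lambda\LL_{M-1}V)^{-1}\|_\infty \le 2(1+|\lambda|\tfrac{1-\ee^{-\tau\Re\lambda}}{\Re\lambda})$ then gives \eqref{psi-p} with the stated $c(B)$ after $\|e_M\|_{NBV}\le\tau\|e_M'\|_\infty$.

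Your stability argument, by contrast, produces $(E_M y)_k = [(I-\Pi_M)g]'(\theta_k)$ with $g=(\lambda I - C_0)^{-1}P_M y$: the \emph{derivative} of an interpolation error evaluated at the nodes. The bound \eqref{interp_error_sup} does not control this quantity; it only controls $\|(I-\LL_{M-1})\varphi\|_\infty$, not $\|((I-\Pi_M)g)'\|_\infty$. Passing from one to the other typically costs a Markov factor $O(M^2)$, which would destroy the smallness of $E_M$. Moreover, $g$ depends on $M$ through $P_M y$, so even appealing to analyticity of $g$ does not give uniform-in-$M$ decay without a separate, nontrivial argument. This is a genuine gap: the tool you cite is not the one you need, and the paper's choice to invert $(I - \lambda\LL_{M-1}V)$ is precisely what sidesteps it, since there the interpolation error acts on $V\varphi$ (Lipschitz) rather than on a derivative.

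A second, smaller issue: even granting your stability bound $\|A_M\|\le 2\|(\lambda I - C_0)^{-1}\|$, your decomposition $\psi-p_M = -(\lambda I - C_0)^{-1}r_M + \lambda[(\lambda I - C_0)^{-1}-A_M](Vr_M)$ does not produce the constant $c(B)$ as stated. The second term is bounded by $|\lambda|\cdot 3\|(\lambda I - C_0)^{-1}\|\cdot\tau\|r_M\|_\infty$, which carries an extra factor $\tau|\lambda|$ compared to the target; ``absorbing'' it would require $\|(\lambda I - C_0)^{-1}-A_M\|\to 0$, which is strictly stronger than the Neumann bound you derive. The paper's single fixed-point inversion avoids this splitting altogether and delivers $c(B)$ cleanly.
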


\begin{proof}
The proof technique is similar to the one used in \cite[Proposition 5.1]{BMVBook} and \cite[Theorem 4.1]{DCDS2020Vermiglio}. We therefore only sketch the main steps.

The function $\psi$ satisfies 
\begin{equation} \label{psi}
\begin{cases}
\psi'(\theta) = \lambda \psi(\theta) - \varphi(\theta), & \theta \in [-\tau,0], \\
\psi(0)=0.
\end{cases}
\end{equation}
\fra{For given $\varphi \in NBV$, the function $\psi$ defined by \eqref{resolvent} satisfies \eqref{psi}. Here, since $\varphi$ is continuous, we have $\psi' \in C$ and the first identity also holds in $\theta=0$.}

On the other hand, proving that \eqref{pMcoll} is well defined is equivalent to showing that the collocation problem 
\begin{equation*} 
\begin{cases}
p_M'(\theta) = \lambda p_M(\theta) - \varphi(\theta), & \theta=\theta_1,\dots,\theta_M, \\
p_M(0)=0
\end{cases}
\end{equation*}
admits a unique solution.
Since $p_M'$ has degree $M-1$, it can be expressed as interpolation polynomial on $\Theta_M$, i.e., as
\begin{equation} \label{pM'_gen}
p_M' 
= \LL_{M-1} (\lambda p_M - \varphi).
\end{equation}
Define $e_M:= \psi-p_M$. Subtracting \eqref{pM'_gen} from the first equation in \eqref{psi} and writing $e_M = V e_M'$, we obtain
\begin{equation} \label{eq_error}
e_M' = \lambda \LL_{M-1} V e_M' + \lambda (I-\LL_{M-1}) \psi - (I-\LL_{M-1})\varphi.
\end{equation}
Note that, since $\varphi, \psi,\psi' \in C$, \eqref{eq_error} can be interpreted as an equation in $C$. We show that, for all $\lambda \in \mathbb{C}$, the operator $(I-\lambda \LL_{M-1} V)$ is invertible in $C$ by showing that it is a perturbation, in supremum norm, of (the restriction to $C$ of) the operator $(I-\lambda V)$. In fact, since $V\varphi$ is Lipschitz for all $\varphi \in C$, from \eqref{interp_error_sup} we can bound
$$ \| \lambda (I - \LL_{M-1}) V \|_\infty \leq c \, \frac{\tilde \Lambda_M}{M} |\lambda|,$$
with $c$ independent of $M$.
In $C$, $(I-\lambda V)$ is invertible with
\begin{equation*} 
[(I-\lambda V)^{-1}\zeta](\theta) = \zeta(\theta) - \lambda \int_\theta^0 \ee^{\lambda (\theta-s)} \zeta(s) \diff s, \qquad \zeta\in C,
\end{equation*}
and
$$ \fra{
\|(I-\lambda V)^{-1} \|_{\infty} \leq 
\begin{cases}
1 + |\lambda| \frac{1-\ee^{-\tau \Re \lambda}}{\Re \lambda}, & \text{if } \Re \lambda \neq 0 \\[6pt]
1 + \tau \, |\lambda | , & \text{if } \Re \lambda = 0.
\end{cases}
}
$$
\fra{For $\Re \lambda \neq 0$}, from Banach's perturbation lemma (e.g., \cite[Theorem 10.1]{KressBook}), by taking $\tilde{M}(\lambda)$ such that $\|\lambda (I-\LL_{M-1})V\|_\infty \, \|(I-\lambda V)^{-1} \|_{\infty} < 1/2$ for all $M \geq \tilde{M}(\lambda)$, we conclude that $(I - \lambda \LL_{M-1}V)$ is invertible in $C$ for all $M \geq \tilde{M}(\lambda)$, and
\fra{
\begin{equation} \label{bound_discr} 
\| (I - \lambda \LL_{M-1} V)^{-1}\|_{\infty} \leq 
\begin{cases}
2 \left(1 + |\lambda| \frac{1-\ee^{-\tau \Re \lambda}}{\Re \lambda}\right), & \text{if } \Re \lambda \neq 0 \\[6pt]
2 (1+\tau |\lambda |), & \text{if } \Re \lambda = 0.
\end{cases}
\end{equation}
}
Hence, from \eqref{eq_error}, 
\begin{equation*} 
e_M' = (I - \lambda \LL_{M-1} V)^{-1} r_M(\lambda,\varphi),
\end{equation*}
with 
\begin{align*}
r_M(\lambda,\varphi) := \lambda (I-\LL_{M-1}) \psi - (I-\LL_{M-1})\varphi 
	= (I-\LL_{M-1}) \psi'.
\end{align*}
The bound \eqref{psi-p} follows from the fact that $ e_M = V e_M' \in \fra{AC_0}$ and therefore
$$ \| e_M \|_{NBV} \leq \tau \| e_M'\|_{\infty},$$
\fra{taking $\overline{M}(B) = \sup_{\lambda \in \overline{B}} \tilde{M}(\lambda)$ and using the fact that the right-hand side of \eqref{bound_discr} is continuous on $\mathbb{C}$.}
\end{proof}

From \eqref{psi-p}, it is clear that $\|\psi_M-p_M\|_{NBV} \to 0$ when $\|r_M\|_\infty \to 0$.
\fra{If $\varphi \in AC_0$}, $\psi' \in \fra{AC_0}$ and therefore $\|r_M\|_\infty \to 0$ when the grid consists of the Chebyshev zeros \eqref{cheb_zeros}, cf.~\cite{Krylov}.
If $\varphi \in D(C_0)$, then $\psi'' \in NBV$ and the convergence $\|r_M\|_\infty \to 0$ holds for any set of nodes satisfying \eqref{cond_Lebesgue}, since
$$ \|(I-\LL_{M-1})\varphi\|_\infty \leq \tilde \Lambda_M \mathcal{E}_{M-1}(\varphi) \leq \frac{2 \tilde \Lambda_M }{\pi (M-2)} \| \psi'' \|_{NBV}, $$
where $\mathcal{E}_M$ denotes the error of the best uniform approximation in $C$ with polynomials of degree $M$ \cite[Chapter 7]{Trefethen2013}.
As already discussed, the order of convergence $\|r_M\|_{\infty} \to 0$ is higher if $\psi'$ has higher regularity (at least Lipschitz continuous, see \eqref{interp_error_sup} and following discussion).

\medskip
\fra{The proof of Lemma \ref{l:res} can be extended to functions $\varphi \in NBV$ that are continuous in $[-\tau,0)$, but have a jump discontinuity in $\theta=0$ (note that this class includes the function $q$).
In fact, note that $(\lambda I-C_0)^{-1}$ maps $NBV$ to $D(C_0)$, hence $\psi = (\lambda I-C_0)^{-1}\varphi \in \fra{AC_0}$.
From \eqref{psi}, we note that if $\varphi$ is discontinuous in $\theta=0$, 
$\psi'$ is discontinuous, too. 
However, we can adapt the previous proof by considering $\varphi$ and $\psi'$ extended by continuity in $\theta=0$, hence working in the space $C$. 
In particular, the following special case for the function $q\in NBV$ is relevant for the analysis of characteristic roots. 
}

\begin{lemma} \label{l:res_1}
\fra{
Let $B$ be an open bounded subset of $\mathbb{C}$ and assume that \eqref{cond_Lebesgue} hold. 
There exists $\overline M (B)$ such that, for any index $M \geq \overline M(B)$ and $\lambda \in B$, the polynomial
\begin{equation*} 
p_{M,\lambda}:= P_M(\lambda I - D_M)^{-1} (-\mathbf{1})
\end{equation*}
is well defined, and, for $\psi_\lambda := (\lambda I - C_0)^{-1}q$,
\begin{equation} \label{error_m_q}
\| \psi_\lambda - p_{M,\lambda} \|_{NBV} \leq C_2(B) \frac{C_1(B)^M}{M!}
\end{equation}
where
\begin{align*}
C_1(B) &= \tau \, \max_{\lambda \in \overline{B}} |\lambda|, \\
C_2(B) &= 2 \, \sup_{\lambda \in \overline{B}} \left[ \left( 1 + |\lambda| \frac{1-\ee^{-\tau \Re \lambda} }{\Re \lambda} \right) \,  \max \{ \ee^{-\tau \Re \lambda}, 1\} \right] . 
\end{align*}
}
\end{lemma}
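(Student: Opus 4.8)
The plan is to recognize this as the special instance $\varphi = q$ of Lemma~\ref{l:res}, and then to exploit the analyticity of the resulting $\psi_\lambda'$ to upgrade the generic bound \eqref{psi-p} into the factorial decay \eqref{error_m_q}. First I would compute $\psi_\lambda$ explicitly from the resolvent formula \eqref{resolvent}: since $q$ equals $-1$ almost everywhere on $[-\tau,0]$, the integral gives $\psi_\lambda(\theta) = (\ee^{\lambda\theta}-1)/\lambda$, which lies in $AC_0$ and satisfies $\psi_\lambda(0)=0$, with derivative $\psi_\lambda'(\theta) = \ee^{\lambda\theta}$. The only feature distinguishing $q$ from the hypotheses of Lemma~\ref{l:res} is its jump at $\theta=0$; but, exactly as noted in the paragraph preceding the statement, both $\psi_\lambda$ and $\psi_\lambda'$ are continuous, and $q$ extended by continuity to $\theta=0$ (the constant $-1$) is interpolated exactly on $\Theta_M$, so $(I-\LL_{M-1})q=0$ and $r_M(\lambda,q)=(I-\LL_{M-1})\psi_\lambda'$. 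Consequently the whole machinery of Lemma~\ref{l:res} carries over: for $M\geq\overline M(B)$ the matrix $\lambda I-D_M$ is invertible, so that $p_{M,\lambda}=P_M(\lambda I-D_M)^{-1}R_M q = P_M(\lambda I-D_M)^{-1}(-\mathbf 1)$ is well defined (using $R_M q=-\mathbf 1$, the nodes lying in $[-\tau,0)$ where $q\equiv -1$), and \eqref{psi-p} gives $\|\psi_\lambda-p_{M,\lambda}\|_{NBV}\leq c(B)\,\|(I-\LL_{M-1})\psi_\lambda'\|_\infty$.

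The decisive new step is to estimate $\|(I-\LL_{M-1})\psi_\lambda'\|_\infty$ using that $\psi_\lambda'=\ee^{\lambda\cdot}$ is entire rather than merely continuous. I would invoke the classical Cauchy remainder formula for polynomial interpolation on the $M$ nodes $\Theta_M$, which gives, for each $\theta\in[-\tau,0]$, the identity $(I-\LL_{M-1})\psi_\lambda'(\theta)=\frac{(\psi_\lambda')^{(M)}(\xi_\theta)}{M!}\prod_{j=1}^M(\theta-\theta_j)$ for some $\xi_\theta\in[-\tau,0]$. Since $(\psi_\lambda')^{(M)}(\theta)=\lambda^M\ee^{\lambda\theta}$, one has $\max_{\xi\in[-\tau,0]}|(\psi_\lambda')^{(M)}(\xi)|=|\lambda|^M\max\{\ee^{-\tau\Re\lambda},1\}$, while $|\theta-\theta_j|\leq\tau$ for every $j$ bounds the nodal polynomial by $\tau^M$. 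This yields the key estimate $\|(I-\LL_{M-1})\psi_\lambda'\|_\infty\leq \frac{(\tau|\lambda|)^M}{M!}\,\max\{\ee^{-\tau\Re\lambda},1\}$, in which the factorial in the denominator is precisely what produces super-algebraic (spectral) convergence, in contrast with the merely algebraic bound \eqref{interp_error_sup} available for Lipschitz data.

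Finally I would combine the two estimates and pass to the supremum over $\overline B$. Inserting the last bound into \eqref{psi-p} and collecting the $\lambda$-dependent factors produces a bound of the form $C_2(B)\,C_1(B)^M/M!$, with $C_1(B)=\tau\max_{\lambda\in\overline B}|\lambda|$ coming from the $(\tau|\lambda|)^M$ term and $C_2(B)$ absorbing the resolvent constant of Lemma~\ref{l:res} together with the factor $\max\{\ee^{-\tau\Re\lambda},1\}$; one takes $\overline M(B)$ to be the threshold already furnished by Lemma~\ref{l:res} via the Banach perturbation lemma, where \eqref{cond_Lebesgue} enters only to control $\|\lambda(I-\LL_{M-1})V\|_\infty$ and hence to guarantee invertibility of $\lambda I - D_M$ uniformly on $\overline B$. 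I expect the main work to be the uniform bookkeeping of the constants over the compact set $\overline B$; the genuinely new idea is slight, namely recognizing that $\psi_\lambda'$ is entire so that the Cauchy remainder replaces the algebraic interpolation estimate by a factorial one, while the subtle point to handle with care is the jump of $q$ at the origin, which must be treated through the extension-by-continuity device so that the error equation can still be read in $C$.
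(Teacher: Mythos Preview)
Your proposal is correct and follows essentially the same route as the paper: compute $\psi_\lambda$ explicitly from \eqref{resolvent} to find $\psi_\lambda'(\theta)=\ee^{\lambda\theta}$, handle the jump of $q$ at $0$ by the extension-by-continuity device so that the error equation of Lemma~\ref{l:res} applies with $r_M=(I-\LL_{M-1})\ee^{\lambda\cdot}$, and then replace the generic interpolation estimate by the Cauchy remainder to obtain the factorial bound. The paper rederives the error equation (introducing $\eta_\lambda:=\ee^{\lambda\cdot}$ and $z_M:=\eta_\lambda-p_{M,\lambda}'$) rather than citing Lemma~\ref{l:res} verbatim, but this is cosmetic; the ingredients and the logic are the same.
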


\begin{proof}
\fra{From \eqref{resolvent} we obtain, for $\theta \in [-\tau,0]$,
\begin{equation} \label{psi_lambda}
\psi_{\lambda}(\theta)= 
\begin{cases}
\displaystyle{\frac{\ee^{\lambda \theta}-1}{\lambda}}, & \text{if } \lambda \neq 0, \\
\theta, & \text{if } \lambda = 0.
\end{cases} 
\end{equation}
Consider now the function $\eta_\lambda(\theta) := \ee^{\lambda \theta}$, $\theta \in [-\tau,0]$, which satisfies
\begin{equation*}
\eta_\lambda(\theta) = \lambda \psi_\lambda(\theta) + q(\theta), \qquad \theta \in [-\tau,0),
\end{equation*}
(i.e., $\eta_\lambda = \psi_\lambda'$) and $\eta_\lambda$ is continuous on $[-\tau,0]$.
}

\fra{As in the proof of Lemma \ref{l:res}, let $p_{M,\lambda}$ be the solution of 
\begin{equation*}
\begin{cases}
p_{M,\lambda}'(\theta) = \lambda p_{M,\lambda}(\theta) + q(\theta), & \theta = \theta_1,\dots,\theta_M, \\
p_{M,\lambda}(0)=0,
\end{cases}
\end{equation*}
and let $z_M := \eta_\lambda - p_{M,\lambda}'$. Then $e_M := \psi_\lambda -p_{M,\lambda} = V z_M$, and $z_M$ satisfies
\begin{equation} \label{eq_error_2}
z_M = \lambda \LL_{M-1} V z_M + \overline r_M(\lambda),
\end{equation}
for $\overline r_M(\lambda):= (I-\LL_{M-1}) \eta_\lambda$. 
Since $\overline r_M(\lambda) \in C$, we can invert equation \eqref{eq_error_2} using \eqref{bound_discr}, 
and the assertion follows from $e_M = V z_M$.
}

\fra{
Using the Cauchy interpolation remainder \cite[Theorem 3.1.1]{Davis1975}, we can bound
\begin{equation} \label{rm}
\| \overline r_M(\lambda) \|_\infty \leq \frac{\tau^M \| \psi^{(M)}\|_\infty}{M!} \leq \frac{\tau^M |\lambda|^M \max \{ \ee^{-\tau \Re \lambda }, 1\}}{M!} 
\end{equation}
The error bound \eqref{error_m_q} follows from $e_M = Vz_M$, using \eqref{eq_error_2}, \eqref{rm} and \eqref{bound_discr} for $\lambda \in \overline{B}$.
}
\end{proof}



\subsection{Characteristic roots} \label{s:conv_CE}

We first study the exponential solutions of \eqref{linear_ODEM} and specify the corresponding discrete characteristic equation.
Next, we prove the convergence of the roots of the characteristic equations and, in turn, of exponential solutions.
\begin{lemma}
\fra{
Let $M\in \mathbb{N}$ and let $\lambda \in \mathbb{C}$, $\lambda \notin \sigma(D_M)$.
Then \eqref{linear_ODEM} has a solution of the form $x(t)=\ee^{\lambda t}y$ with nontrivial $y \in \mathbb{C}^M$ if and only if $\lambda$ is a root of the characteristic equation
\begin{equation} \label{discr_CE}
1=K_M (\lambda I-D_M)^{-1} (-\mathbf{1}).
\end{equation}
}
\end{lemma}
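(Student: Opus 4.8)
The plan is to substitute the exponential ansatz into the linear ODE and reduce the question to a finite-dimensional eigenvalue problem for the rank-one perturbation $D_M - \mathbf{1} K_M$ of $D_M$. Plugging $x(t) = \ee^{\lambda t} y$ into \eqref{linear_ODEM} and cancelling the scalar factor $\ee^{\lambda t}$, the existence of such a solution becomes equivalent to the algebraic identity
$$ (\lambda I - D_M) y = -\mathbf{1}\,(K_M y). $$
Since the right-hand side is a scalar multiple of the fixed vector $-\mathbf{1}$, this is exactly where the rank-one structure of the perturbation enters, and it is the reason the characteristic equation \eqref{discr_CE} takes the resolvent form $K_M(\lambda I - D_M)^{-1}(-\mathbf{1})$.

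For the forward implication, I would use the hypothesis $\lambda \notin \sigma(D_M)$ to invert $\lambda I - D_M$ and rewrite the identity as
$$ y = (K_M y)\,(\lambda I - D_M)^{-1}(-\mathbf{1}). $$
The crucial observation is that a nontrivial $y$ forces the scalar $K_M y \neq 0$: if instead $K_M y = 0$, then the displayed identity would give $y = 0$ (equivalently $(\lambda I - D_M)y = 0$ would make $\lambda$ an eigenvalue of $D_M$), contradicting both the nontriviality of $y$ and the assumption $\lambda \notin \sigma(D_M)$. Multiplying on the left by the row vector $K_M$ and cancelling the nonzero scalar $K_M y$ then yields precisely \eqref{discr_CE}.

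For the converse, assuming \eqref{discr_CE} holds, I would \emph{construct} the eigenvector directly: set $y := (\lambda I - D_M)^{-1}(-\mathbf{1})$, which is nonzero because $-\mathbf{1} \neq 0$ and $\lambda I - D_M$ is invertible. Equation \eqref{discr_CE} says exactly that $K_M y = 1$, so $(\lambda I - D_M)y = -\mathbf{1} = -\mathbf{1}\,(K_M y)$, which rearranges to $\lambda y = D_M y - \mathbf{1}(K_M y)$; hence $x(t) = \ee^{\lambda t} y$ is a nontrivial solution of \eqref{linear_ODEM}.

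I do not expect any genuine obstacle here: the argument is elementary, and the only subtlety is careful bookkeeping of which quantities are scalars (namely $K_M y$), row vectors ($K_M$), column vectors ($\mathbf{1}$ and $y$), and matrices ($D_M$). The assumption $\lambda \notin \sigma(D_M)$ plays a double role — it makes the resolvent $(\lambda I - D_M)^{-1}$ well defined, and it is precisely what guarantees the scalar $K_M y$ is nonzero in the forward direction. As an alternative one could instead apply the matrix determinant (Sherman–Morrison) lemma to $\det(\lambda I - D_M + \mathbf{1} K_M)$, which factors as $\det(\lambda I - D_M)\bigl(1 + K_M(\lambda I - D_M)^{-1}\mathbf{1}\bigr)$ and gives \eqref{discr_CE} upon dividing by the nonzero factor $\det(\lambda I - D_M)$; the direct substitution, however, is more self-contained and transparently handles the nontriviality of the eigenvector.
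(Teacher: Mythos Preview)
Your proof is correct and follows essentially the same approach as the paper: substitute the exponential ansatz, use $\lambda\notin\sigma(D_M)$ to invert and to force $K_M y\neq 0$, then cancel the scalar for the forward direction, and construct $y=(\lambda I-D_M)^{-1}(-\mathbf{1})$ for the converse. The Sherman--Morrison remark is a nice addendum but is not in the paper; otherwise the arguments coincide.
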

\begin{proof}
Substitution of $\ee^{\lambda t}y$ into \eqref{linear_ODEM} leads to 
$$
\lambda y = D_M y-(K_My) \, \mathbf{1}.
$$
First note that, for $M \geq \overline M(\lambda)$, $K_My \neq 0$ must hold. Indeed $\lambda \notin \sigma(D_M)$ and therefore $\lambda y = D_M y$ can not hold.
Next, we can invert the equation to obtain
$$y= (K_M y)(D_M-\lambda I)^{-1}\mathbf{1}.$$
Hence we should have 
$$ K_M y = (K_M y)\, K_M (D_M-\lambda I)^{-1}\mathbf{1},$$
which amounts to \eqref{discr_CE} since $K_M y \neq 0$.

\smallskip
\fra{Vice versa, assume \eqref{discr_CE} holds and take $y := (D_M-\lambda I)^{-1}\mathbf{1}$. Note that $y \neq 0$ because $(D_M-\lambda I)$ has full rank. Moreover, $K_My = \mathbf{1}$, hence
$$ (D_M-\lambda I)y = \mathbf{1} = K_My,$$
i.e., \eqref{discr_CE} holds and therefore $\ee^{\lambda t}y$ is a solution of \eqref{linear_ODEM}.
}
\end{proof}

\bigskip
Let now
\begin{align*}
\chi(\lambda) &:= 1 - \int_0^\tau k(a) \ee^{-\lambda a} \diff a = 1 - \langle k, \psi_\lambda \rangle, \\
\chi_M(\lambda) &:= 1 - K_M (D_M-\lambda I)^{-1} \mathbf{1} = 1 - \langle k, P_M (\lambda I-D_M)^{-1} (-\mathbf{1}) \rangle, 
\end{align*}
with $\psi_\lambda$ defined in \eqref{psi_lambda},
so that the roots of \eqref{CE} and \eqref{discr_CE} coincide with the zeros of $ \chi(\lambda)=0$ and $\chi_M(\lambda)=0$, respectively.
We want to prove the convergence of the zeros of $\chi_M$ to the zeros of $\chi$.
Lemma \ref{l:res_1} allows us to state the following result.

\begin{theorem} \label{th:conv_CE}
\fra{Let $B$ be an open bounded subset of $\mathbb{C}$, and let \eqref{cond_Lebesgue} hold.
There exists $\overline M(B)$ such that, for any index $M \geq \overline M(B)$ and $\lambda \in B$, $\lambda \notin \sigma(D_M)$, 
$$ |\chi(\lambda)-\chi_M(\lambda)| \leq C_2(B) \frac{C_1(B)^M}{M!} \, \| k\|_\infty,$$
for $C_1(B)$ and $C_2(B)$ defined as in Lemma \ref{l:res_1}.
}
\end{theorem}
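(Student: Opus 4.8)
The plan is to read Theorem \ref{th:conv_CE} as an immediate corollary of Lemma \ref{l:res_1}: both characteristic functions are obtained by pairing the fixed kernel $k$ against the resolvent of $C_0$ (respectively its pseudospectral surrogate) applied to $q$, so the error in the characteristic function is controlled by the resolvent error already estimated in the lemma. The first step is to collapse the difference $\chi(\lambda)-\chi_M(\lambda)$ into a single pairing. From the definitions one has $\chi(\lambda)=1-\langle k,\psi_\lambda\rangle$ and $\chi_M(\lambda)=1-\langle k,p_{M,\lambda}\rangle$; to see the latter I would first record the elementary identity $K_M v=\langle k,P_M v\rangle$ for $v\in\mathbb{C}^M$, which follows directly from $(K_M)_j=\langle k,\ell_j\rangle$ and the definition \eqref{defP} of $P_M$, together with $(D_M-\lambda I)^{-1}\mathbf{1}=(\lambda I-D_M)^{-1}(-\mathbf{1})$. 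Linearity of the pairing then gives
\[
\chi(\lambda)-\chi_M(\lambda)=-\langle k,\psi_\lambda-p_{M,\lambda}\rangle .
\]

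The second step is to bound this pairing by the $NBV$ norm of the resolvent error. Since $\psi_\lambda=(\lambda I-C_0)^{-1}q\in D(C_0)\subset AC_0$ and $p_{M,\lambda}$ is a polynomial vanishing at $\theta=0$, hence also in $AC_0$, their difference lies in $AC_0$ and the absolutely continuous form of the pairing applies. A single Hölder inequality then yields
\[
\big|\langle k,\psi_\lambda-p_{M,\lambda}\rangle\big|=\left|\int_{-\tau}^0 k(-a)\,(\psi_\lambda-p_{M,\lambda})'(a)\diff a\right|\leq \|k\|_\infty\,\|(\psi_\lambda-p_{M,\lambda})'\|_{L^1}=\|k\|_\infty\,\|\psi_\lambda-p_{M,\lambda}\|_{NBV},
\]
where the final equality is just the definition of the $NBV$ norm on $AC_0$.

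The third step is to invoke Lemma \ref{l:res_1}: for $M\geq\overline M(B)$ the polynomial $p_{M,\lambda}$ is well defined (so that $\lambda\notin\sigma(D_M)$ and $\chi_M(\lambda)$ makes sense) and $\|\psi_\lambda-p_{M,\lambda}\|_{NBV}\leq C_2(B)\,C_1(B)^M/M!$. Substituting into the previous two displays produces exactly the claimed estimate, with the same threshold $\overline M(B)$ and the same constants $C_1(B),C_2(B)$.

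I expect no genuinely hard step here: the analytic substance — invertibility of $(I-\lambda\LL_{M-1}V)$ in $C$ and the factorial decay arising from the Cauchy interpolation remainder for the entire function $\eta_\lambda=\psi_\lambda'$ — was already carried out in Lemma \ref{l:res_1}. The only points that warrant care are bookkeeping ones: confirming that both arguments of the pairing are genuinely in $AC_0$, so that the sharp $L^1$--$L^\infty$ bound (rather than the coarser general $BV$ pairing) is legitimate, and inheriting the \emph{uniform} threshold $\overline M(B)$ over the bounded set $B$, so that the estimate holds simultaneously for every $\lambda\in B$ with $\lambda\notin\sigma(D_M)$.
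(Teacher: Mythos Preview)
Your proposal is correct and follows essentially the same route as the paper: write $\chi(\lambda)-\chi_M(\lambda)$ as the pairing $\langle k,\psi_\lambda-p_{M,\lambda}\rangle$, bound it by $\|k\|_\infty\,\|\psi_\lambda-p_{M,\lambda}\|_{NBV}$, and invoke Lemma~\ref{l:res_1}. Your version is in fact more careful than the paper's in spelling out why $K_M v=\langle k,P_M v\rangle$ and why both arguments of the pairing lie in $AC_0$, but the argument is the same.
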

\begin{proof}
We have
\begin{align*}
\big|\chi(\lambda)-\chi_M(\lambda)\big| 
	&= \big| \langle k , \psi_{\lambda} \rangle - \langle k, P_M \fra{(\lambda I - D_M)^{-1}} \mathbf{1} \rangle \big| \\
	&\leq \| k \|_{\infty} \| \psi_\lambda - P_M  \fra{(\lambda I - D_M)^{-1}} \mathbf{1}\|_{NBV},
\end{align*}
\fra{and the assertion follows from \eqref{error_m_q}.}
\end{proof}
We are now ready to conclude that roots of \eqref{CE} are approximated by roots of \eqref{discr_CE}.
The proof of the following result follows the lines of \cite[Section 5.3.2]{BMVBook}. We repeat the proof here for completeness.

\begin{theorem} \label{th:conv_lambda}
Let $\lambda$ be a root of \eqref{CE} with multiplicity $\nu$, and let $B$ be an open ball of center $\lambda$ such that \fra{there is no other characteristic root of \eqref{CE} in $B$}.
Under assumption \eqref{cond_Lebesgue}, there exists a positive integer $\overline{M}=\overline{M}(B)$ such that, for $M \geq \overline{M}$, there exist $\nu$ roots $\lambda_1,\dots,\lambda_\nu$ (each counted as many times as its multiplicity) of \eqref{discr_CE} with
$$ \max_{j=1,\dots,\nu} \big| \lambda - \lambda_j \big| \leq  \left(  \frac{2 \nu!}{|\chi^{(\nu)}(\lambda)|} \max_{z\in B\setminus\{\lambda\}} |\chi(z)-\chi_M(z)| \right)^{\frac{1}{\nu}}.$$
In particular, $ \max_{j=1,\dots,\nu} | \lambda - \lambda_j | \to 0 $ as $M\to \infty$. 
\end{theorem}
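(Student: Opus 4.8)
The plan is to treat this as an application of Rouch\'e's theorem for the zeros of the analytic functions $\chi$ and $\chi_M$. The key observation is that $\chi$ is an entire (or at least analytic) function of $\lambda$, being the characteristic function in \eqref{CE}, and likewise $\chi_M$ is meromorphic with poles only at the eigenvalues $\sigma(D_M)$. By hypothesis $\lambda$ is a root of $\chi$ of multiplicity $\nu$ and $B$ contains no other root of $\chi$. First I would shrink $B$ if necessary and work on a smaller closed ball (or circle) $\overline{B}$ on which $\chi$ has the single zero $\lambda$; on the boundary $\partial B$ the function $\chi$ is therefore bounded away from zero, say $|\chi(z)| \geq \delta > 0$ for $z \in \partial B$.

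The heart of the argument is the uniform approximation estimate from Theorem \ref{th:conv_CE}: for $M \geq \overline M(B)$ and $\lambda \in B \setminus \sigma(D_M)$ we have $|\chi(z) - \chi_M(z)| \leq C_2(B) C_1(B)^M / M! \, \|k\|_\infty$, which tends to $0$ superexponentially as $M \to \infty$. Hence for $M$ large enough the difference $|\chi - \chi_M|$ is smaller than $\delta \leq |\chi|$ on $\partial B$, so by Rouch\'e's theorem $\chi$ and $\chi_M$ have the same number of zeros (counted with multiplicity) inside $B$, namely $\nu$. This yields the existence of $\nu$ roots $\lambda_1,\dots,\lambda_\nu$ of \eqref{discr_CE} in $B$, and the convergence $\max_j |\lambda - \lambda_j| \to 0$ follows by shrinking $B$.

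To obtain the explicit quantitative bound on $\max_j |\lambda - \lambda_j|$, I would exploit the order-$\nu$ vanishing of $\chi$ at $\lambda$. Near $\lambda$, Taylor's theorem gives $\chi(z) = \frac{\chi^{(\nu)}(\lambda)}{\nu!}(z-\lambda)^\nu + o(|z-\lambda|^\nu)$, so $\prod_{j=1}^\nu (\lambda - \lambda_j)$ is controlled by $\chi_M$ evaluated suitably: since $\chi_M(\lambda_j) = 0$, one compares $\chi(\lambda_j) = \chi(\lambda_j) - \chi_M(\lambda_j)$ with the leading Taylor term. Concretely, summing or multiplying these relations and using $|\chi(\lambda_j)| = |\chi(\lambda_j) - \chi_M(\lambda_j)| \leq \max_{z \in B} |\chi - \chi_M|$, together with the lower bound $|\chi(z)| \gtrsim \frac{|\chi^{(\nu)}(\lambda)|}{\nu!}|z-\lambda|^\nu$ valid on a shrunken ball, produces the stated inequality with the factor $2$ absorbing the higher-order Taylor remainder.

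The main obstacle I anticipate is the bookkeeping in the quantitative estimate rather than the qualitative Rouch\'e step. One must justify that, on a ball small enough, the remainder in the Taylor expansion of $\chi$ is dominated by the leading term so that the lower bound $|\chi(z)| \geq \frac{|\chi^{(\nu)}(\lambda)|}{2 \cdot \nu!}|z-\lambda|^\nu$ holds (this is where the factor $2$ in the final bound comes from), and then argue that each approximate root $\lambda_j$ lies in this small ball for $M$ large. A subtle point is handling the poles of $\chi_M$: one needs $\overline M(B)$ large enough that $\sigma(D_M) \cap \overline B = \emptyset$ on the relevant region, or at least that no eigenvalue of $D_M$ interferes with the Rouch\'e count on $\partial B$; the analyticity of $\chi_M$ away from $\sigma(D_M)$, guaranteed by the earlier lemmas, is what makes the winding-number comparison legitimate.
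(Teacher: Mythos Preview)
Your proposal is correct and follows essentially the same route as the paper: Rouch\'e's theorem combined with a Taylor lower bound $|\chi(z)| \geq \tfrac{|\chi^{(\nu)}(\lambda)|}{2\,\nu!}|z-\lambda|^\nu$ valid on a small ball (the factor $2$ indeed absorbs the Taylor remainder, which the paper controls via the Cauchy integral formula). The only cosmetic difference is in how the explicit bound is extracted: the paper defines the radius
\[
r^* := \left( \frac{2\,\nu!}{|\chi^{(\nu)}(\lambda)|}\,\max_{|z-\lambda|=r_1}\varepsilon_M(z) \right)^{1/\nu}
\]
and applies Rouch\'e a single time on the circle $|z-\lambda|=r^*$, so that existence of the $\nu$ roots and the quantitative estimate come out simultaneously; you instead propose first obtaining existence on $\partial B$ and then deducing the bound from the pointwise inequality $|\chi(\lambda_j)| = |\chi(\lambda_j)-\chi_M(\lambda_j)|$. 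Both work, and your version makes the mechanism behind the constant slightly more transparent, at the cost of needing an extra step to localize the $\lambda_j$ in the small ball where the Taylor lower bound is valid.
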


\begin{proof}
We want to apply Rouch\'e's theorem from complex analysis, which states that, if $\chi$ and $\chi_M$ are continuous on a compact set $K$ and holomorphic in the interior of $K$, and $|\chi(z)-\chi_M(z)|<|\chi(z)|$ for all $z \in \partial K$, then $\chi$ and $\chi_M$ have the same number of zeros inside $K$, see \cite[Chapter 10]{RudinBookComplex} or \cite[Section 7.7]{PriestleyBook}.
Clearly, the complex-valued functions $\chi$ and $\chi_M$ are holomorphic in $B$.
Let
$$ \varepsilon_M(z) :=  \big| \chi(z) - \chi_M(z)\big|.$$
From Theorem \ref{th:conv_CE}, we have that $ \varepsilon_M(z) \to 0$ for all $z \in B\setminus \{\lambda\}$.

\medskip
Since $\lambda$ has multiplicity $\nu$, we have $\chi^{(n)}(\lambda) =0$ for $n=0,\dots,\nu-1$.
For $z$ in a neighborhood $U\subset B$ of $\lambda$, by Taylor expansion we have
\fra{
\begin{equation} \label{chi_taylor}
\chi(z) = \frac{\chi^{(\nu)}(\lambda)}{\nu!} (z-\lambda)^\nu + R_\nu(z),
\end{equation}
with
$$ R_\nu(z) = \sum_{j=\nu+1}^\infty \frac{\chi^{(j)}(\lambda)}{j!} (z-\lambda)^j = \sum_{j=\nu+1}^\infty \frac{(z-\lambda)^j}{2\pi i}  \int_{\partial B} \frac{\chi(w)}{(w-\lambda)^{j+1}} \diff w.
$$
If $r$ denotes the radius of the ball $B$ centered in $\lambda$, we have
\begin{equation} \label{R-bound}
|R_\nu(z)| \leq M_B \sum_{j=\nu+1}^\infty \frac{(z-\lambda)^j}{r^j} \leq M_B \frac{\alpha(z)^{\nu+1}}{1-\alpha(z)}
\end{equation}
where $M_B := \sup_{z \in \partial B} \frac{|\chi(z)|}{2\pi r}$ and $\alpha(z) = \frac{|z-\lambda|}{r}<1$.
Take now $r_1=r_1(B)$ such that $B(\lambda,r_1) \subset B$ and, for all $z \in B(\lambda,r_1)$,
\begin{equation} \label{r1}
M_B \frac{\alpha(z)^{\nu+1}}{1-\alpha(z)} < \frac{1}{2} \frac{|\chi^{(\nu)}(\lambda)|}{\nu!} |z-\lambda|^\nu.
\end{equation}
From \eqref{chi_taylor}, \eqref{R-bound} and \eqref{r1} we have that,
}
%
%
for all $z \in B(\lambda,r_1)$,
\begin{equation} \label{error_2}
|\chi(z)| > \frac{1}{2} \frac{|\chi^{(\nu)}(\lambda)|}{\nu!} |z-\lambda|^\nu.
\end{equation}
Now take $\overline M= \overline M(B)$ large enough so that, for all $M\geq \overline M$,
\begin{equation} \label{error_1}
\max_{|z-\lambda|=r_1} \varepsilon_M(z) \leq \frac{1}{2} \frac{|\chi^{(\nu)}(\lambda)|}{\nu!} r_1^\nu,
\end{equation}
and define $r^*=r^*(M)$ by
\begin{equation} \label{r*}
r^* := \left( \frac{\max_{|z-\lambda|=r_1} \varepsilon_M(z)}{\frac{1}{2} \frac{|\chi^{(\nu)}(\lambda)|}{\nu!}} \right)^{\frac{1}{\nu}}.
\end{equation}
Note that, by \eqref{error_1}, it is $r^*\leq r_1$.
Then, by combining \eqref{error_2} and \eqref{error_1}, we have
\begin{align*}
\max_{|z-\lambda|=r^*} \big| \chi(z) - \chi_M(z) \big| 
	&\leq \max_{|z-\lambda|=r_1} \big| \chi(z) - \chi_M(z) \big|  \\
	&= \frac{1}{2} \frac{|\chi^{(\nu)}(\lambda)|}{\nu!} (r^*)^\nu \\
	& < \fra{|\chi(z)|},
\end{align*} 
where the last inequality holds for all $z$ such that $|z-\lambda| = r^*$.
Hence for all $M\geq \overline M$ we can apply Rouch\'e's theorem on $B(\lambda,r^*)$, and conclude that $\chi$ and $\chi_M$ have the same number of zeros in $B(\lambda,r^*)$, where each zero is counted as many times as its multiplicity.

More precisely, there exist $\lambda_1,\dots,\lambda_\nu$ such that, for all $j=1,\dots,\nu$, $ \chi_M(\lambda_j)=0$ and
$$ |\lambda - \lambda_j| < r^*.$$
Now note from \eqref{r*} that $r^*=r^*(M) \to 0$ as $M\to \infty$. 
Hence we have proved the convergence of characteristic roots with their multiplicity.
\end{proof}

Thanks to the previous theorem we know that each ``true'' characteristic root is approximated as $M\to \infty$ with its multiplicity. 
\fra{Finally, given a sequence $\{\lambda_M\}_{M}$ satisfying $\chi_M(\lambda_M)=0$ for $M\geq \overline{M}$ for some $\overline M \in \mathbb{N}$, and such that $\lambda_M \to \lambda \in \mathbb{C}$ as $M\to \infty$, from the continuity of $\chi$ and $\chi_M$ we can conclude that $\chi(\lambda)=0$.
}

Combining the result of Theorem \ref{th:conv_lambda} with the bound in Lemma \ref{l:res}, it follows that the order of convergence of the characteristic roots depends on the interpolation error of $\psi_\lambda'$.
In particular, Cauchy's remainder theorem \cite[Theorem 3.1.1]{Davis1975} gives the more precise bound
\begin{equation} \label{bound_modulus}
\|(I-\mathcal{L}_{M-1}) \psi_\lambda' \|_\infty \leq \frac{\tau^{M} \| \psi_\lambda^{(M)}\|_{\infty}}{M!} \leq \frac{\tau^{M} |\lambda|^{M-1} \max\{ \ee^{- \tau \Re \lambda}, 1\} }{M!} .
\end{equation} 
Hence, the modulus of $\lambda$ affects the order of convergence of the roots of the discrete characteristic equation, as observed also in \cite{BMVBook,BMV2005}. More precisely, characteristic roots that are smaller in modulus are approximated with better accuracy for the same value of $M$. This can indeed be observed experimentally in Figure \ref{f:specialRE_eig}.

\subsection{Numerical results}

\begin{figure}[p]
\centering
\includegraphics[width=\textwidth]{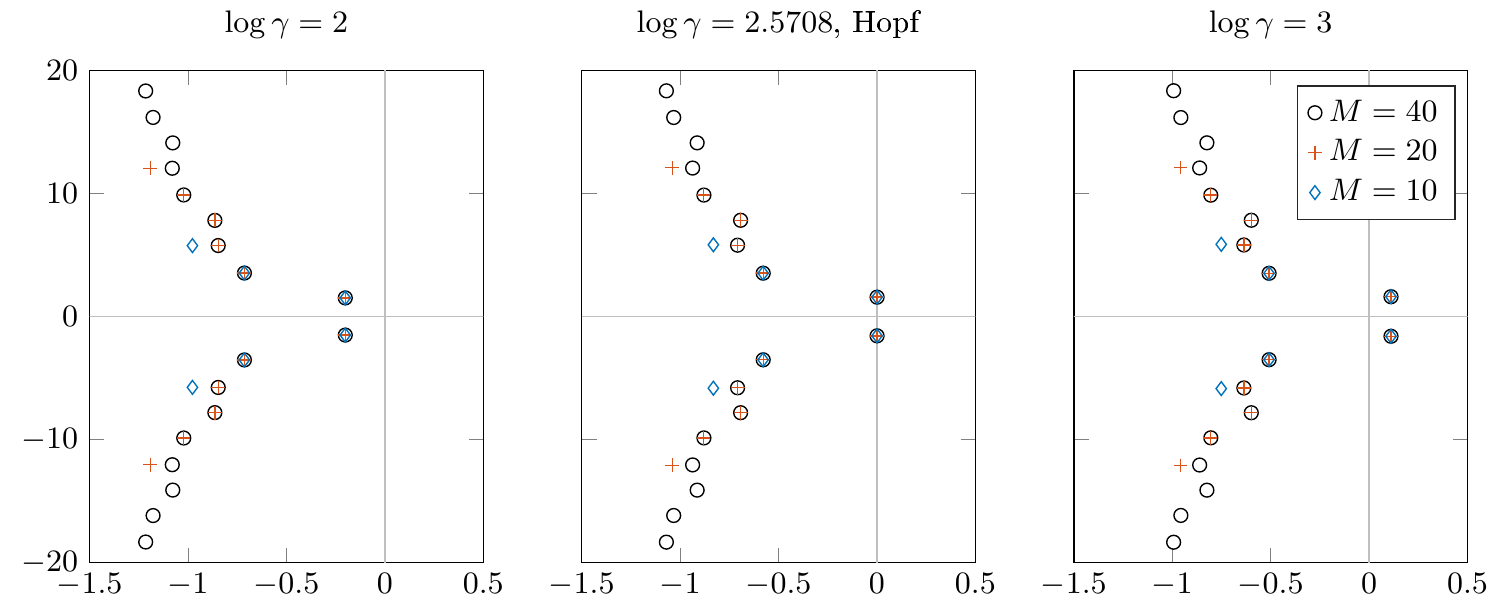} 
\caption{Equation \eqref{specialRE}. Eigenvalues associated with the positive equilibrium at $\log\gamma=2,2.57,3$, for $M=10,20,40$.
Larger (in modulus) eigenvalues to the left are not shown.
Note the rightmost pair of eigenvalues crossing the vertical axis in a Hopf bifurcation, as $\log\gamma$ increases.
}
\label{f:specialRE_eig}
\end{figure}

\begin{figure}[p]
\includegraphics[width=\textwidth]{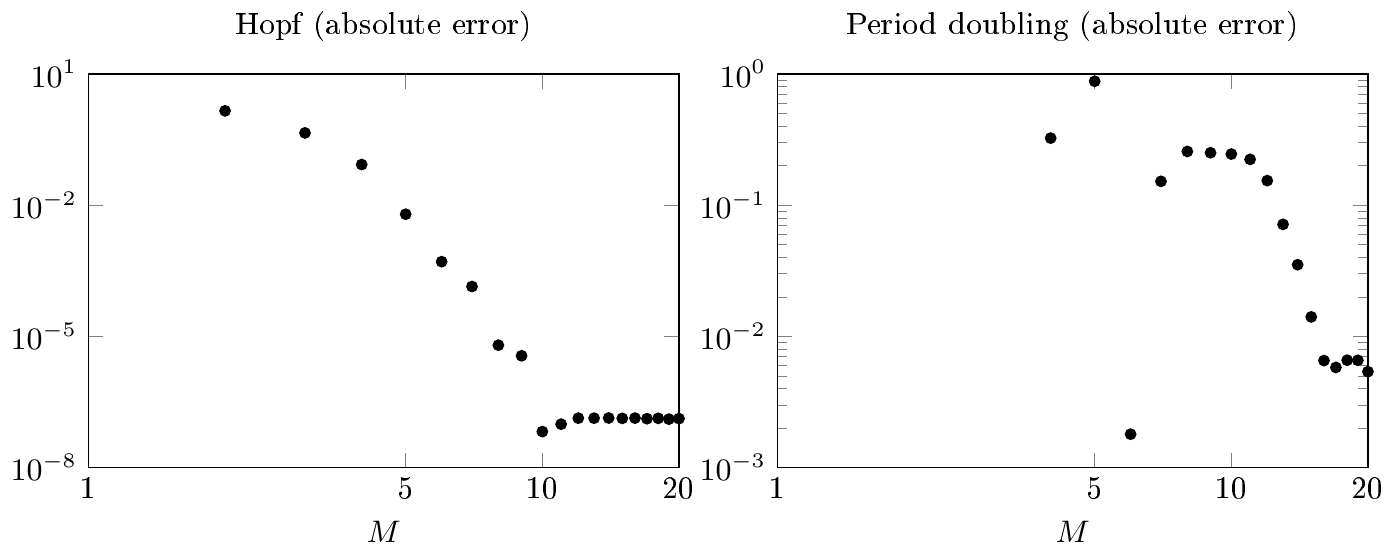}
\caption{Equation \eqref{specialRE}. Log-log plot of the error in the detection of the Hopf bifurcation point (left) and the first period doubling bifurcation (right), increasing $M$.
}
\label{f:specialRE_errors}
\end{figure}

\begin{figure}[t]
\centering
\includegraphics[width=\textwidth]{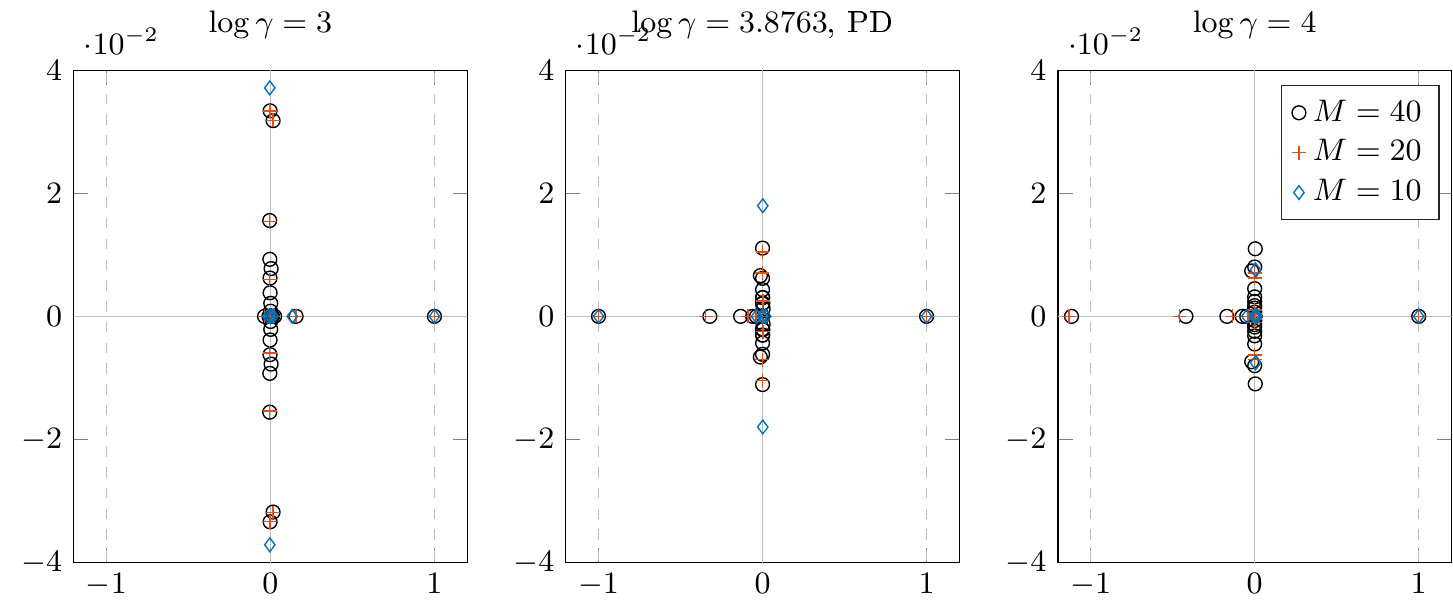}
\caption{
Equation \eqref{specialRE}. Multipliers associated with the branch of periodic solutions emerging from Hopf, at $\log\gamma=3,3.8763,4$, for $M=10,20,40$.
Note that the periodic solutions lose stability via a period doubling bifurcation, with the leftmost multiplier exiting the unit circle through $-1$ as $\log\gamma$ increases.
For $M=10$ and $\log\gamma=4$ the leftmost multiplier is at approximately $\lambda \approx -2.6793$ (not visible in the leftmost panel).
}
\label{f:specialRE_mult}
\end{figure}

To illustrate the convergence result proved in Theorem \ref{th:conv_lambda} we consider again equation \eqref{specialRE}, focusing on the approximation of the eigenvalues associated with the nontrivial equilibrium.
In the following analyses, we examine the approximated eigenvalues returned as output of the numerical continuation with MatCont and study experimentally the convergence as $M$ increases.
Similar analyses have been performed on the other examples presented in Section \ref{s:examples}, showing analogous results.
Explicit formulas for the differentiation matrix associated to \eqref{cheb_zeros} with $\theta_0=0$, as well as the barycentric weights for interpolation, are reported in \ref{app:cheb_zeros}.

Figure \ref{f:specialRE_eig} shows the approximated eigenvalues associated with the nontrivial equilibrium for $\log\gamma=2,3$, and at the detected Hopf bifurcation point, located at $\log\gamma \approx 2.5708$.
We plotted the approximated spectrum for several values of $M$, with $M=10,20,40$. 
Note that the rightmost eigenvalues are well approximated already for $M=10$, whereas eigenvalues that are larger in modulus require higher values of $M$ to obtain a satisfactory approximation. 
This is in agreement with the bound \eqref{bound_modulus}, since the modulus of $\lambda$ affects the speed of convergence.
The different panels show the rightmost pair of eigenvalues crossing the vertical axis from left to right in a Hopf bifurcation.

As an outlook on the approximation of bifurcation points of equilibria and periodic solutions, we have studied experimentally the error behavior of the MatCont bifurcation points, Hopf and period doubling, see Figure \ref{f:specialRE_errors} (log-log plot). 
The error is computed with respect to the values obtained with the current method and $M=40$.
Both plots show the typical spectral accuracy behavior, but a reliable approximation of the period doubling bifurcation requires a larger dimension of the approximating system. 
The barrier of $10^{-7}$ evident in the left panel is likely due to the tolerance options imposed in MatCont (tolerance $10^{-10}$ for Newton's method and $10^{-6}$ for the calculation of the test functions for bifurcation points).
The computation of the period doubling was performed with tolerance options $10^{-6}$ in MatCont.
Although we did not study theoretically the convergence of multipliers, we can expect that, similarly as in Theorem \ref{th:conv_lambda} for eigenvalues, the multiplicity plays a role in the order of convergence.
The fact that the trivial multiplier $\mu=1$ has multiplicity 2 due to the integration of the state may in turn have consequences on the convergence rate of the multipliers corresponding to periodic solutions.

Regarding periodic solutions, we plot the approximated multipliers associated with the branch of periodic solutions emerging from Hopf, for $\log\gamma=3,4$, and at the period doubling bifurcation detected at $\log\gamma \approx 3.8763$, see Figure \ref{f:specialRE_mult}. 
Note the leftmost multiplier exiting the unit circle via $-1$, indicating a period doubling bifurcation.

\section{Discussion and outlook} \label{s:outlook}

We have proposed a new approach for the approximation of a nonlinear RE with a system of ODE.
Similarly as in \cite{SIADS2016,EJQTDE2016}, the approach is based on the formulation of the problem as an abstract Cauchy problem on a space of functions and its approximation via pseudospectral techniques.
The novelty here is that the space of functions is chosen as the space $AC_0$, rather than $L^1$. This is done by first integrating the state and then considering the integrated variable as state variable. A similar idea has been proposed for PDE in \cite{Vietnam2020}.
Compared with the method proposed in \cite{SIADS2016}, this approach returns an approximation that is naturally formulated as a system of ODE, avoiding the algebraic equation emerging from the rule for extension of the RE. 
In terms of efficiency, the reduction in computational cost is approximately tenfold, as evidenced in Figure \ref{f:specialRE_time} and Table \ref{t:times}.

The attention here is restricted to the subspace $\fra{AC_0} \subset NBV$ consisting of absolutely continuous functions. In \cite{TwinSemigroups}, the variation-of-constants formula and the construction of the solution operators of linear equations are in fact extended to $NBV$. So one might wonder whether pseudospectral approximation can be extended from $\fra{AC_0}$ to $NBV$? In the $NBV$ setting, solutions are allowed to make jumps when time proceeds. 
To capture the jumps  of $v(t)$, the discretized vector $x(t)$ should be allowed to have jumps as well. As a consequence, one cannot in general describe the dynamics of the vector in terms of just a differential equation. 
In this case one could consider other techniques like spectral or finite elements approximations.
%

Although the construction is carried out in the space $NBV$, we resorted to bounds of the interpolation error in supremum norm (hence in a subspace of $C$) when proving the error bounds in Lemma \ref{l:res}, since the literature about convergence of interpolation in supremum norm is traditionally more extensive than the corresponding one in the $L^1$-norm. We wonder if similar (or even stronger) conclusions can be stated in terms of the $NBV$ norm, possibly exploiting results about the interpolation of $NBV$ functions and bounds in terms of bounded variation norm \cite{MastroianniBook, Trefethen2013, Mastroianni2010}.

In our theoretical results, the condition \eqref{cond_Lebesgue} concerning the asymptotic (for the number of points going to infinity) behavior of the Lebesgue constant plays a crucial role. It is known that for Chebyshev zeros, cf.~\eqref{cheb_zeros}, the condition is satisfied, see \cite[Chapter 1.4.6]{MastroianniBook}. For other meshes the condition is not guaranteed. For instance, for Chebyshev extremal nodes without one or both endpoints it is only known that the quantity at the left-hand side of \eqref{cond_Lebesgue} is bounded for $M$ tending to infinity \cite[Chapter 4.2]{MastroianniBook}.
However, Chebyshev extrema are widely used for pseudospectral differentiation and integration in the bounded interval, and efficient numerical routines exist for the associated differentiation matrix, and interpolation and quadrature weights \cite{Trefethen2000,Weideman2000}.
Our experimental results (not included here), show comparable results when using Chebyshev zeros or extrema, both in terms of accuracy and computational times. It would be interesting if either \eqref{cond_Lebesgue} could be verified for a large class of meshes, or the proof of convergence could be restructured such that only a weaker variant of \eqref{cond_Lebesgue}, known to hold for a large class of meshes, is needed.

We proved that every characteristic root of a linear(ized) RE is approximated by the characteristic roots of the corresponding pseudospectral approximation when the dimension $M$ is large enough.
Vice versa, the limit of every convergent sequence of discrete characteristic roots is a ``true'' characteristic root of the delay equation. An open problem is proving that the dimension of the unstable manifold is preserved for $M$ large enough, or alternatively that the infinite- and finite-dimensional problems have the same number of eigenvalues lying in any right-half of the complex plane, if $M$ is large enough. 

We also remark that proving convergence of Hopf bifurcations requires not only the convergence of the eigenvalues, but also to verify that the transversality conditions are satisfied at the bifurcation point, and the direction of bifurcation is preserved as $M\to \infty$ \cite{DGG2007,DiekmannBook}. 
For DDE, this is done in \cite{Babette}. To adapt the proofs to RE, one should obtain explicit formulas for the direction of Hopf and verify the convergence (see \cite[Remark 2.22]{DGG2007} and \cite{Babette}). 

The natural next step beyond the study of equilibria is the approximation of periodic solutions and their convergence as $M\to \infty$. In this case, one should first ensure the convergence of the finite-time solution maps, and hence focus the attention on the initial value problems associated with the RE and its ODE approximation.
A study of the convergence of the solution operators is in the authors' pipeline.
The approximation of stability of the periodic solutions then relies on the approximation of the multipliers of the (time periodic) linearized equations, as formally proved in \cite{Breda2020}.

The analysis of Nicholson's blowflies equation, and in particular Figure \ref{f:Nich-fixed-M}, shows the impossibility of using a discretization of a truncated interval to approximate the behavior of an equation with infinite delay. In this case, techniques specific to the unbounded integration interval seem necessary, along the lines of \cite{AMC2018, IlariaThesis}. 

To keep the notation simple, in Section \ref{s:method} we introduced the approximation approach for scalar equations. We stress however that the extension to systems of equations is quite straightforward and can be done along the lines of \cite{SIADS2016}. In fact, the combination of the current method for RE with the pseudospectral discretization of DDE \cite{SIADS2016,BredaSanchez2015} provides a strategy for approximating general systems  where a RE is coupled with a DDE, which arise frequently in population dynamics, see e.g.\ \cite{DGM2010}.

\section*{Acknowledgements}
\noindent
\fra{The authors are thankful to two anonymous reviewers for their constructive comments that improved the manuscript.
The research of FS was supported by the NSERC-Sanofi Industrial Research Chair in Vaccine
Mathematics, Modelling and Manufacturing.
FS and RV are members of the INdAM Research group GNCS and of the UMI Research group ``Modellistica Socio-Epidemiologica''.}

{ \footnotesize

}

\appendix
\section{Renewal equation formulation of Nicholson's blowflies equation}
\label{app:blowflies}

\begin{proposition}
\fra{Let $c>0$ and
\begin{equation} \label{b-blowflies}
b(t):= \frac{\beta_0}{c} A(t) \ee^{-A(t)}.
\end{equation}
}
Then \eqref{Nich-DDE} is equivalent to $b(t)=F(b_t)$ with
\begin{equation} \label{F-Nich}
F(\phi) := \beta_0 \int_1^{\infty} \phi(-s) \mathcal{F}(s) \diff s \, \ee^{- c \int_1^{\infty} \phi(-s) \mathcal{F}(s) \diff s}
\end{equation}
with $\mathcal{F}(s)=\ee^{-\mu s}$.
\end{proposition}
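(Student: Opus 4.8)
The plan is to observe that, by the very definition \eqref{b-blowflies}, the nonlinear forcing term of the DDE \eqref{Nich-DDE} can be rewritten in terms of $b$ alone: since $c\,b(t)=\beta_0 A(t)\ee^{-A(t)}$, one has $\beta_0 \ee^{-\mu} A(t-1)\ee^{-A(t-1)} = c\,\ee^{-\mu}\, b(t-1)$, so that \eqref{Nich-DDE} becomes the \emph{linear} inhomogeneous scalar ODE
\begin{equation*}
A'(t) = -\mu A(t) + c\,\ee^{-\mu}\,b(t-1).
\end{equation*}
The key step is then to integrate this equation by the variation-of-constants formula and to recognise the resulting convolution of $b$ against the kernel $\mathcal{F}(s)=\ee^{-\mu s}$. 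Explicitly, I expect to obtain
\begin{equation*}
A(t) = c\int_1^{\infty} b(t-s)\,\ee^{-\mu s}\diff s,
\end{equation*}
after the change of variables $u=t-\sigma$, $s=u+1$, in which the factor $\ee^{-\mu}$ merges with $\ee^{-\mu u}$ into $\ee^{-\mu s}$ and the delay-$1$ shift $b(\sigma-1)$ is turned into the lower limit $s=1$ of the integral.

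For the direction from the RE to the DDE, I would start from a solution $b$ of $b(t)=F(b_t)$, \emph{define} $A$ by the displayed convolution, and differentiate. Writing $A(t)=c\,\ee^{-\mu t}\int_{-\infty}^{t-1} b(\sigma)\ee^{\mu\sigma}\diff\sigma$, differentiation returns $A'(t)=-\mu A(t)+c\,\ee^{-\mu}b(t-1)$, which is \eqref{Nich-DDE} once the forcing term is re-expressed through \eqref{b-blowflies}. It then remains to verify \eqref{b-blowflies} itself: since $\int_1^{\infty} b(t-s)\ee^{-\mu s}\diff s = A(t)/c$, the renewal equation reads $b(t)=\beta_0\,(A(t)/c)\,\ee^{-A(t)}=\frac{\beta_0}{c}A(t)\ee^{-A(t)}$, exactly \eqref{b-blowflies}. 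Conversely, given a solution $A$ of \eqref{Nich-DDE}, I set $b:=\frac{\beta_0}{c}A\ee^{-A}$, use the linearised form above and variation of constants to recover the same convolution representation of $A$, and substitute it into the definition of $b$; collecting the factors of $c$ yields precisely $b(t)=F(b_t)$ with $F$ as in \eqref{F-Nich}.

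The main obstacle, which I would treat with care, is the passage to the improper integral $\int_{-\infty}^{t-1}$ (equivalently $\int_1^{\infty}$): on a finite window the variation-of-constants formula produces a transient term $\ee^{-\mu(t-t_0)}A(t_0)$ together with the convolution over $[t_0,t]$, and one must justify that this transient is absorbed (as $t_0\to-\infty$, or through the prescribed history) so that the two formulations genuinely share the same solutions on $(0,\infty)$. This step requires a mild control on the growth of $b$, or equivalently boundedness of $A$, to ensure absolute convergence of the convolution against $\ee^{-\mu s}$. Once this is secured, the bookkeeping of the constant $c$ and of the delay shift is routine, and the equivalence follows in both directions.
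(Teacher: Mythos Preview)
Your argument is correct and follows essentially the same route as the paper: rewrite the DDE as the linear equation $A'(t)=-\mu A(t)+c\,\ee^{-\mu}b(t-1)$, integrate by variation of constants to obtain $A(t)=c\int_1^{\infty}\ee^{-\mu s}b(t-s)\diff s$, and substitute into \eqref{b-blowflies}. You are in fact more thorough than the paper, which only sketches the DDE-to-RE direction and does not discuss the transient term.
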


\begin{proof}
The equation $A'(t)=-\mu A(t) + c \ee^{-\mu} b(t-1)$ implies 
\begin{align*}
A(t) &= c \int_{-\infty}^t \ee^{-\mu(t-s)} \ee^{-\mu} b(s-1) \diff s \\
	&= c \int_{-\infty}^{t-1} \ee^{-\mu(t-\sigma)} b(\sigma) \diff \sigma \\
	&= c \int_1^{\infty} \ee^{-\mu \eta} b_t(-\eta) \diff \eta.
\end{align*}
Now substitute this expression for $A(t)$ in \eqref{b-blowflies}.
\end{proof}

\fra{Equation \eqref{Nich-RE} is obtained from \eqref{F-Nich} with $c=1$. However, it turned out that, for the numerical continuation with MatCont, the choice $c=100$ worked better than $c=1$. So the Figures \ref{f:Nich-fixed-tau} and \ref{f:Nich-fixed-M} were made using $c=100$.
}

\section{Chebyshev zeros with one additional endpoint}
\label{app:cheb_zeros}

We include here some useful formulas for the construction of the differentiation matrix and the barycentric weights associated with the mesh $\Theta_M \cup \{0\}$ used for the numerical simulations in the paper.
We first summarise the formulas for the classical Chebyshev zeros in the interval $(-1,1)$ with the addition of the endpoint $1$, and then show how to obtain the corresponding formulas for the nodes shifted in the delay interval $(-\tau,0]$. 

The Chebyshev zeros are defined as the roots of the Chebyshev polynomial of the first kind of degree $M$, see e.g. \cite{Xu2016} for a recent review.
In the interval $(-1,1)$, the nodes are given by the explicit formulas
\begin{equation}\label{cheb_zeros_app}
x_j = \cos \left( \frac{2j-1}{2M}\pi \right), \qquad j=1,\dots,M,
\end{equation}
or the equivalent expression
\begin{equation*}
x_j= \sin \left(\frac{M-(2j-1)}{2M}\pi \right), \qquad j=1,\dots,M,
\end{equation*}
which maintains the exact symmetry of $x_j$ about the origin in floating-point arithmetic \cite{Xu2016}.

Let 
$$\ell(x) := \prod_{j=1}^M (x-x_j)$$
be the node polynomial. 
Using barycentric weights, defined by 
$$ w_j = \frac{1}{\prod_{k\neq j} (x_j-x_k)} = \frac{1}{\ell'(x_j)}, \qquad j=1,\dots,M,$$
the polynomial $p$ interpolating the points $y_j$ on the nodes $x_j$, $j=1,\dots,M$, is efficiently computed by the barycentric interpolation formula
\begin{equation} \label{barycentric}
p(x) = \frac{\sum_{j=1}^M \frac{w_j}{x-x_j} y_j}{\sum_{j=1}^M \frac{w_j}{x-x_j}}
\end{equation}
see, e.g., \cite{Berrut2004}. We recall that the main computational advantage of \eqref{barycentric} is that $p(x)$ can be evaluated in $O(M)$ floating-point operations, once the barycentric weights $w_j$ are given.

For the nodes \eqref{cheb_zeros_app}, the weights $w_j$ are given by the explicit formulas
\begin{equation} \label{wj}
w_j = \frac{2^{M-1}}{M} (-1)^j \sin \left( \frac{2j-1}{2M}\pi \right), \qquad j=1,\dots,M.
\end{equation}
It is important to note that the factor $ \frac{2^{M-1}}{M}$, independent of $j$, \fra{cancels out} in the numerator and denominator of \eqref{barycentric}, so it can be ignored for computational reasons. We decided to include it explicitly in \eqref{wj} because it will become important with the later addition of the node $x=1$.

To compute the differentiation matrix $(d_{kj})_{j,k=1,\dots,M}$, we first define
$$ c_j := \prod_{k\neq j} (x_j-x_k) = w_j^{-1}.$$
The off-diagonal entries are computed by
$$ d_{kj} = \frac{1}{x_k-x_j} \frac{c_k}{c_j} \qquad k \neq j
$$
and the diagonal entries $d_{kk}$, $k=1,\dots,M$ can be computed for instance by imposing that $\sum_{j=1}^M d_{kj}=0$ for all $k=1,\dots,M$, see, e.g., \cite{Trefethen2000,Weideman2000}.

\bigskip
We now construct the corresponding elements $\ell^+_j$, $w^+_j$, $d^+_{kj}$ for the set of $M+1$ nodes formed by \eqref{cheb_zeros_app} with the addition of $x_0 = 1$. 
Define 
$$K := \ell(1)^{-1} = \frac{1}{\prod_{k=1}^M (1-x_k)},$$
then \fra{the Lagrange polynomials become}
$$
\ell^+_0(x) = \prod_{k=1}^M \frac{x-x_k}{1-x_k} = K \ell(x), \qquad
\ell^+_j(x) = \prod_{\substack{k=0 \\ k\neq j}}^M \frac{x-x_k}{x_j-x_k} = \frac{x-1}{x_j-1} \ell_j(x), \qquad j=1,\dots,M,
$$
and, for the barycentric weights, we have
$$
w^+_0 = K, \qquad
w^+_j = \frac{w_j}{x_j-1} = \frac{2^{M-1}}{M} \frac{(-1)^j}{x_j-1} \sin \left( \frac{2j-1}{2M}\pi \right), \qquad j=1,\dots,M.
$$
It is now clear that, when simplifying the factor $\frac{2^{M-1}}{M}$ from $w_j$ and $w^+_j$, $j=1,\dots,M$, we should also normalize the weight $w^+_0$.
Next we have, for $c^+_j := (w^+_j)^{-1}$, 
$$ d^+_{kj} = \frac{1}{x_k - x_j} \frac{c^+_k}{c^+_j}, \qquad k\neq j.
$$
and the diagonal coefficients $d^+_{kk}$ are then normalized as before by imposing $\sum_{j=0}^M d^+_{kj}=0$ for all $k=0,\dots,M$.
To approximate integrals over the interval $(-1,1)$ we can use quadrature formulas corresponding to the nodes \eqref{cheb_zeros_app}, with corresponding weights
$$ q_k = \frac{2}{M} \left( 1- 2 \sum_{j=1}^{\lfloor M/2 \rfloor} \frac{\cos 2 j \xi_k}{4j^2-1} \right), \qquad j=1,\dots,M, $$
where $\xi_k = \frac{(2k-1)\pi}{M}$ are the arguments of the cosine in \eqref{cheb_zeros_app}, see \cite{Xu2016}.
We also recall that, to avoid cancellations when computing the differences $x_k-x_j$ in the differentiation matrix, one can use the trigonometric identities (see \cite{Weideman2000})
\begin{equation*}
x_k - x_j = \cos \xi_k - \cos \xi_j = 2 \sin \left(\frac{\xi_k+\xi_j}{2}\right) \, \sin\left(\frac{\xi_j-\xi_k}{2}\right).
\end{equation*}

\bigskip
Finally, we can construct the desired mesh in the interval $(-\tau,0]$ by simply taking $\theta_0=0$, and shifting and scaling the nodes $x_j$ as
$$ \theta_j = \frac{\tau}{2} (x_j-1), \qquad j=0,\dots,M.$$
The corresponding barycentric weights $w^\tau_j$ are obtained by a suitable scaling factor via
$$ w^\tau_j = \frac{1}{\prod_{k\neq j} (\theta_j-\theta_k)} 
= \left(\frac{2}{\tau}\right)^M w^+_j, \qquad j=0,\dots,M.
$$
Note again that the multiplicative factor, independent of $j$, can be ignored as it cancels out in \eqref{barycentric};
finally the entries of the differentiation matrix $(d^\tau_{kj})_{k,j=0,\dots,M}$ and the quadrature weights $q^\tau_k$ satisfy
$$ d^\tau_{kj} = \frac{2}{\tau} d^+_{kj}, \qquad q^\tau_k = \frac{\tau}{2} q_k, \qquad k,j=0,\dots,M. $$

%
%

\end{document}